\newtheorem{theorem}{Theorem}[section]
\newtheorem{lemma}[theorem]{Lemma}
\newtheorem{corollary}[theorem]{Corollary}
\newcommand{\N}{{\mathbb N}}
\newcommand{\mymod}[3]{#1 \equiv #2 \kern -0.5em \pmod{#3}}
\newcommand{\mynotmod}[3]{#1 \not \equiv #2 \kern -0.6em \pmod{#3}}
\begin{document}

\title[Identities for Third Order Jacobsthal Quaternions]{Identities for Third Order Jacobsthal Quaternions}

\author[G. Cerda-Morales]{Gamaliel Cerda-Morales}
\address{Instituto de Matem\'aticas, P. Universidad Cat\'olica de Valpara\'iso, Blanco Viel 596, Cerro Bar\'on, Valpara\'iso, Chile.}
\email{gamaliel.cerda.m@mail.pucv.cl}


\begin{abstract}
In this paper we introduce the third order Jacobsthal quaternions and the third order Jacobsthal-Lucas quaternions and give some of their properties. We derive the relations between third order Jacobsthal numbers and third order Jacobsthal quaternions and we give the matrix representation of these quaternions.

\vspace{2mm}

\noindent\textsc{2010 Mathematics Subject Classification.} 11B37, 11R52, 20G20.

\vspace{2mm}

\noindent\textsc{Keywords and phrases.} Jacobsthal number, quaternion, recurrence relation, third order Jacobsthal number.

\end{abstract}



\maketitle


\section{Introduction}
The Jacobsthal numbers have many interesting properties and applications in many fields of science (see, e.g., \cite{Ba}). The Jacobsthal numbers $J_{n}$ are defined \cite{Hor3} by the recurrence relation
\begin{equation}\label{e1}
J_{0}=0,\ J_{1}=1,\ J_{n+1}=J_{n}+2J_{n-1},\ n\geq1.
\end{equation}

Another important sequence is the Jacobsthal-Lucas sequence. This sequence is defined by the recurrence relation
\begin{equation}
j_{0}=2,\ j_{1}=1,\ j_{n+1}=j_{n}+2j_{n-1},\ n\geq1.
\end{equation}

In \cite{Cook-Bac} the Jacobsthal recurrence relation is extended to higher order recurrence relations and the basic list of identities provided by A. F. Horadam \cite{Hor3} is expanded and extended to several identities for some of the higher order cases. In particular, the third order Jacobsthal numbers $J_{n}^{(3)}$ and the third order Jacobsthal-Lucas numbers $j_{n}^{(3)}$ are defined by
\begin{equation}\label{c1}
J_{n+3}^{(3)}=J_{n+2}^{(3)}+J_{n+1}^{(3)}+2J_{n}^{(3)},\ J_{0}^{(3)}=0,\ J_{1}^{(3)}=J_{2}^{(3)}=1,\ n\geq0,
\end{equation}
and 
\begin{equation}\label{c2}
j_{n+3}^{(3)}=j_{n+2}^{(3)}+j_{n+1}^{(3)}+2j_{n}^{(3)},\ j_{0}^{(3)}=2,\ j_{1}^{(3)}=1,\ j_{2}^{(3)}=5,\ n\geq0,
\end{equation}
respectively.

The first third order Jacobsthal numbers and third order Jacobsthal-Lucas numbers are presented in the following table.
\begin{table}[htb]
\begin{center}
\begin{tabular}{|l|l|l|l|l|l|l|l|l|l|l|l|l|}
\hline
n & 0&1&2&3&4&5&6&7&8&9&10&... \\
\hline 
$J_{n}^{(3)}$ & 0&1&1&2&5&9&18&37&73&146&293&... \\ \hline
$j_{n}^{(3)}$  & 2&1&5&10&17&37&74&145&293&586&1169&... \\ \hline
\end{tabular}
\end{center}
\end{table}

On the other hand, Horadam \cite{Hor1} introduced the $n$-th Fibonacci and the $n$-th Lucas quaternion as follow:
\begin{equation}\label{f1}
Q_{n}=F_{n}+iF_{n+1}+jF_{n+2}+kF_{n+3},
\end{equation}
\begin{equation}\label{f2}
K_{n}=L_{n}+iL_{n+1}+jL_{n+2}+kL_{n+3},
\end{equation}
respectively. Here $F_{n}$ and $L_{n}$ are the $n$-th Fibonacci and Lucas numbers, respectively. Furthermore, the basis $i,j,k$ satisface the following rules:
\begin{equation}\label{f3}
i^{2}=j^{2}=k^{2}=-1,\ ijk=-1.
\end{equation}

Note that the rules (\ref{f3}) imply $ij=-ji=k$, $jk=-kj=i$ and $ki=-ik=j$. In general, a quaternion is a hyper-complex number and is defined by $q=q_{0}+iq_{1}+jq_{2}+kq_{3}$, where $i,j,k$ are as in (\ref{f3}). Note that we can write $q=q_{0}+u$ where $u= iq_{1}+jq_{2}+kq_{3}$. The conjugate of the quaternion $q$ is denoted by $q^{*}=q_{0}-u$. The norm
of a quaternion $q$ is defined by $N(q)=q_{0}^{2}+q_{1}^{2}+q_{2}^{2}+q_{3}^{2}$. 

Many interesting properties of Fibonacci and Lucas quaternions can be found in \cite{Hal1,Hor2}. In \cite{Hal2}, Halici investigated complex Fibonacci quaternions. In \cite{Hor2} Horadam mentioned the possibility of introducing Pell quaternions and generalized Pell quaternions. In \cite{Szy-Wl}, the authors defined the Jacobsthal quaternions and the Jacobsthal-Lucas quaternions. 

In this paper we introduce and study the third order Jacobsthal Quaternions and the third order Jacobsthal-Lucas Quaternions. We describe their properties also using a matrix representation. 

\section{Third order Jacobsthal numbers}
For third order Jacobsthal and third order Jacobsthal-Lucas numbers many identities are given, see \cite{Cook-Bac}. In this paper we need some of them.
\begin{equation}\label{e3}
3J_{n}^{(3)}+j_{n}^{(3)}=2^{n+1},
\end{equation}
\begin{equation}\label{e5}
j_{n}^{(3)}-2j_{n-3}^{(3)}=3J_{n}^{(3)},
\end{equation}
\begin{equation}\label{e6}
j_{n}^{(3)}-4J_{n}^{(3)}=\left\{ 
\begin{array}{ccc}
2 & \textrm{if} & \mymod{n}{0}{3} \\ 
-3 & \textrm{if} & \mymod{n}{1}{3}\\ 
1 & \textrm{if} & \mymod{n}{2}{3}%
\end{array}%
\right. ,
\end{equation}
\begin{equation}\label{e7}
j_{n+1}^{(3)}+j_{n}^{(3)}=3J_{n+2}^{(3)},
\end{equation}
\begin{equation}\label{e8}
j_{n}^{(3)}-J_{n+2}^{(3)}=\left\{ 
\begin{array}{ccc}
1 & \textrm{if} & \mymod{n}{0}{3} \\ 
-1 & \textrm{if} & \mymod{n}{1}{3} \\ 
0 & \textrm{if} & \mymod{n}{2}{3}%
\end{array}%
\right. ,
\end{equation}
\begin{equation}\label{e9}
\left( j_{n-3}^{(3)}\right) ^{2}+3J_{n}^{(3)}j_{n}^{(3)}=4^{n},
\end{equation}
\begin{equation}\label{e10}
\sum\limits_{k=0}^{n}J_{k}^{(3)}=\left\{ 
\begin{array}{ccc}
J_{n+1}^{(3)} & \textrm{if} & \mynotmod{n}{0}{3} \\ 
J_{n+1}^{(3)}-1 & \textrm{if} & \mymod{n}{0}{3}%
\end{array}%
\right. ,
\end{equation}
\begin{equation}\label{e11}
\sum\limits_{k=0}^{n}j_{k}^{(3)}=\left\{ 
\begin{array}{ccc}
j_{n+1}^{(3)}-2 & \textrm{if} & \mynotmod{n}{0}{3} \\ 
j_{n+1}^{(3)}+1 & \textrm{if} & \mymod{n}{0}{3}%
\end{array}%
\right. 
\end{equation}
and
\begin{equation}\label{e12}
\left( j_{n}^{(3)}\right) ^{2}-9\left( J_{n}^{(3)}\right)^{2}=2^{n+2}j_{n-3}^{(3)}.
\end{equation}

Using standard techniques for solving recurrence relations, the auxiliary equation, and its roots are given by 
$$x^{3}-x^{2}-x-2=0;\ x = 2,\ \textrm{and}\ x=\frac{-1\pm i\sqrt{3}}{2}.$$ 

Note that the latter two are the complex conjugate cube roots of unity. Call them $\omega_{1}$ and $\omega_{2}$, respectively. Thus the Binet formulas can be written as
\begin{equation}\label{b1}
J_{n}^{(3)}=\frac{2}{7}2^{n}-\frac{3+2i\sqrt{3}}{21}\omega_{1}^{n}-\frac{3-2i\sqrt{3}}{21}\omega_{2}^{n}
\end{equation}
and
\begin{equation}\label{b2}
j_{n}^{(3)}=\frac{8}{7}2^{n}+\frac{3+2i\sqrt{3}}{7}\omega_{1}^{n}+\frac{3-2i\sqrt{3}}{7}\omega_{2}^{n},
\end{equation}
respectively.

Now we are giving a lemma for the subsequence of the third order Jacobsthal sequences to determine the recurrence relation. Let $\omega_{1}$ and $\omega_{2}$ are roots of $x^{2}+x+1=0$. Then, for any positive integer $r$, $\epsilon_{r}=\omega_{1}^{r}+\omega_{2}^{r}$ is always a integer number. In fact, it is easy to see that
\begin{equation}
\omega_{1}^{r}+\omega_{2}^{r}=\left\{ 
\begin{array}{ccc}
2 & \textrm{if} & \mymod{r}{0}{3} \\ 
-1& \textrm{if} & \mynotmod{r}{0}{3} \\ 
\end{array}%
\right. ,
\end{equation}
because $\omega_{1}$ and $\omega_{2}$ are the complex conjugate cube roots of unity. 

\begin{lemma}\label{le1}
For $n\geq3$ and the integers $r,s$ such that $0\leq s <r$,
\begin{equation}\label{gam1}
J_{rn+s}^{(3)}=(2^{r}+\epsilon_{r})J_{r(n-1)+s}^{(3)}-(2^{r}\epsilon_{r}+1)J_{r(n-2)+s}^{(3)}+2^{r}J_{r(n-3)+s}^{(3)},
\end{equation}
with $\epsilon_{r}=\omega_{1}^{r}+\omega_{2}^{r}$, where $\omega_{1}$ and $\omega_{2}$ are the roots of $x^{2}+x+1=0$.
\end{lemma}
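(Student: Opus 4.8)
The plan is to reduce the claim to the Binet formula \eqref{b1}. Fix $r\geq 1$ and an integer $s$ with $0\leq s<r$, and substitute $n\mapsto rn+s$ into \eqref{b1}:
\begin{equation*}
J_{rn+s}^{(3)}=\frac{2}{7}2^{s}\left(2^{r}\right)^{n}-\frac{3+2i\sqrt{3}}{21}\omega_{1}^{s}\left(\omega_{1}^{r}\right)^{n}-\frac{3-2i\sqrt{3}}{21}\omega_{2}^{s}\left(\omega_{2}^{r}\right)^{n}.
\end{equation*}
Viewed as a function of $n$, the sequence $\bigl(J_{rn+s}^{(3)}\bigr)_{n}$ is thus a fixed linear combination of the three geometric progressions with ratios $2^{r}$, $\omega_{1}^{r}$ and $\omega_{2}^{r}$, and hence it satisfies the linear recurrence whose characteristic polynomial is $p(x)=(x-2^{r})(x-\omega_{1}^{r})(x-\omega_{2}^{r})$. (This stays true when $3\mid r$, in which case $\omega_{1}^{r}=\omega_{2}^{r}=1$ and $p(x)=(x-2^{r})(x-1)^{2}$: any combination of $(2^{r})^{n}$ and $1^{n}$ still satisfies the recurrence with characteristic polynomial $p$.)

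The second step is to expand $p(x)$ and read off the recurrence. Since $\omega_{1}$ and $\omega_{2}$ are the roots of $x^{2}+x+1$, we have $\omega_{1}\omega_{2}=1$, so $(\omega_{1}\omega_{2})^{r}=1$, while $\omega_{1}^{r}+\omega_{2}^{r}=\epsilon_{r}$ by definition. Hence
\begin{equation*}
p(x)=(x-2^{r})\bigl(x^{2}-\epsilon_{r}x+1\bigr)=x^{3}-(2^{r}+\epsilon_{r})x^{2}+(2^{r}\epsilon_{r}+1)x-2^{r},
\end{equation*}
and the recurrence with characteristic polynomial $p$ is exactly \eqref{gam1}. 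The hypotheses $n\geq 3$ and $0\leq s<r$ are needed only to ensure that the four indices $rn+s$, $r(n-1)+s$, $r(n-2)+s$, $r(n-3)+s$ are nonnegative, so that \eqref{b1} may be applied to every term on both sides.

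Alternatively, one can avoid the general theory of linear recurrences and verify \eqref{gam1} by direct substitution. Insert \eqref{b1} into the right-hand side of \eqref{gam1} and collect, separately, the coefficients of $2^{rn}$, of $\omega_{1}^{rn}$ and of $\omega_{2}^{rn}$. The $2^{rn}$-coefficient reduces to the scalar identity $(2^{r}+\epsilon_{r})2^{r}-(2^{r}\epsilon_{r}+1)+1=2^{2r}$, which is immediate. Using $\omega_{1}^{-r}=\omega_{2}^{r}$ (again because $\omega_{1}\omega_{2}=1$), the $\omega_{1}^{rn}$-coefficient reduces to $\epsilon_{r}\omega_{1}^{r}-1=\omega_{1}^{2r}$, which holds since $\epsilon_{r}\omega_{1}^{r}=(\omega_{1}^{r}+\omega_{2}^{r})\omega_{1}^{r}=\omega_{1}^{2r}+(\omega_{1}\omega_{2})^{r}=\omega_{1}^{2r}+1$; the $\omega_{2}^{rn}$-coefficient is symmetric. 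Either way there is no genuine obstacle — the only facts one must record are $\omega_{1}\omega_{2}=1$ and $\omega_{1}^{r}+\omega_{2}^{r}=\epsilon_{r}$ — so the real risk is arithmetic slips while carrying the Binet constants $\tfrac{2}{7}$ and $\tfrac{3\pm 2i\sqrt{3}}{21}$ through the computation. Checking against the tabulated values of $J_{n}^{(3)}$, for instance with $r=2$, confirms the coefficients $2^{r}+\epsilon_{r}$, $2^{r}\epsilon_{r}+1$ and $2^{r}$.
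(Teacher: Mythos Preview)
Your proof is correct and rests on the same idea as the paper's: substitute the Binet formula \eqref{b1} and check that the right-hand side of \eqref{gam1} collapses to $J_{rn+s}^{(3)}$. Your primary packaging via the characteristic polynomial $p(x)=(x-2^{r})(x-\omega_{1}^{r})(x-\omega_{2}^{r})$ is a clean way to organise that computation (and your remark about the degenerate case $3\mid r$ is a nice extra), while your ``alternative'' direct substitution is precisely the calculation the paper carries out.
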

\begin{proof}
In order to prove the claim, we will use the Binet formula of the third order Jacobsthal sequence. If we evaluate the right hand side of equation (\ref{gam1}), then
\begin{align*}
7((2^{r}+\epsilon_{r})J_{r(n-1)+s}^{(3)}&-(2^{r}\epsilon_{r}+1)J_{r(n-2)+s}^{(3)}+2^{r}J_{r(n-3)+s}^{(3)})\\
&=(2^{r}+\epsilon_{r})(2\cdot 2^{r(n-1)+s}-(a\omega_{1}^{r(n-1)+s}+b\omega_{2}^{r(n-1)+s}))\\
&\ \ -(2^{r}\epsilon_{r}+1)2\cdot 2^{r(n-1)+s}-(a\omega_{1}^{r(n-2)+s}+b\omega_{2}^{r(n-2)+s})\\
&\ \ +2^{r}(2\cdot 2^{r(n-3)+s}-(a\omega_{1}^{r(n-3)+s}+b\omega_{2}^{r(n-3)+s}))\\
&=2\cdot 2^{rn+s}-(a\omega_{1}^{rn+s}+b\omega_{2}^{rn+s})\\
&=7J_{rn+s}^{(3)},
\end{align*}
where $a=1+\frac{2i\sqrt{3}}{3}$ and $b=1-\frac{2i\sqrt{3}}{3}$. Thus the proof is complete.
\end{proof}

If $r=1$ and $s=0$ in the Lemma \ref{le1}, we obtain the third order Jacobsthal recurrence. For this case, Shannon and Horadam \cite{Sha-Hor} computed the $n$-th power of this matrix
\begin{equation}
\left[ \begin{array}{lrr} 1&1&2\\ 1&0&0\\0&1&0 \end{array}\right]^{n}=\left[ \begin{array}{lrr} J_{n+1}^{(3)}&J_{n}^{(3)}+2J_{n-1}^{(3)}&2J_{n}^{(3)}\\ J_{n}^{(3)}&J_{n-1}^{(3)}+2J_{n-2}^{(3)}&2J_{n-1}^{(3)}\\ J_{n-1}^{(3)}&J_{n-2}^{(3)}+2J_{n-3}^{(3)}&2J_{n-2}^{(3)} \end{array}\right],
\end{equation}
where $J_{-1}^{(3)}=0$, $J_{-2}^{(3)}=\frac{1}{2}$ and $J_{-3}^{(3)}=-\frac{1}{4}$.

The sequences $J_{n}^{(3)}$ can be defined for negative values of $n$ by using the definition of their recurrent relation and initial conditions. 

For $n\geq3$, the sequence $R_{n}$ is defined as follows
\begin{equation}\label{h1}
R_{n}^{(3)}=-\frac{1}{2}R_{n-1}^{(3)}-\frac{1}{2}R_{n-2}^{(3)}+\frac{1}{2}R_{n-3}^{(3)},
\end{equation}
with initial conditions $R_{0}^{(3)}=R_{1}^{(3)}=0$ and $R_{2}^{(3)}=\frac{1}{2}$. Note that $J_{-n}^{(3)}=R_{n}^{(3)}$ for all $n\geq 0$. 

Now, we define 
\begin{equation}
L_{r}=\left[ \begin{array}{lrr} 2^{r}+\epsilon_{r}&-(2^{r}\epsilon_{r}+1)&2^{r}\\1&0&0\\ 0&1&0 \end{array}\right]
\end{equation}
and
\begin{equation}
F_{r,n}=\left[ \begin{array}{lrr} J_{r(n+1)}^{(3)}&K_{r,n}^{(3)}&2^{r}J_{rn}^{(3)}\\J_{rn}^{(3)}&K_{r,n-1}^{(3)}&2^{r}J_{r(n-1)}^{(3)}\\ J_{r(n-1)}^{(3)}&K_{r,n-2}^{(3)}&2^{r}J_{r(n-2)}^{(3)}  \end{array}\right]
\end{equation}
where $K_{r,n}^{(3)}=-(2^{r}\epsilon_{r}+1)J_{rn}^{(3)}+2^{r}J_{r(n-1)}^{(3)}$.

\begin{theorem}
For all $n\in \N$,
\begin{equation}
J_{r}^{(3)}L_{r}^{n}+2^{r}J_{-r}^{(3)}L_{r}^{n-1}=F_{r,n}.
\end{equation} 
\end{theorem}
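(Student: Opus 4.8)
The plan is to prove the identity by induction on $n$, using Lemma~\ref{le1} (in the form $r$ arbitrary, $s=0$) as the recurrence that both sides satisfy, and checking enough base cases to pin down the solution. First I would observe that $L_r$ is the companion matrix of the characteristic polynomial
$$p(x)=x^{3}-(2^{r}+\epsilon_{r})x^{2}+(2^{r}\epsilon_{r}+1)x-2^{r},$$
so by the Cayley--Hamilton theorem $L_r^{n+3}=(2^{r}+\epsilon_{r})L_r^{n+2}-(2^{r}\epsilon_{r}+1)L_r^{n+1}+2^{r}L_r^{n}$ for all $n\ge 0$; by Lemma~\ref{le1} (with $s=0$) the scalar sequence $n\mapsto J_{rn}^{(3)}$ obeys the very same third-order recurrence. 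Hence the matrix-valued sequence
$$G_n:=J_{r}^{(3)}L_{r}^{n}+2^{r}J_{-r}^{(3)}L_{r}^{n-1}$$
and the matrix-valued sequence $n\mapsto F_{r,n}$ both satisfy this recurrence (for $F_{r,n}$ this is immediate entrywise from Lemma~\ref{le1}, since every entry is, up to a constant factor, one of $J_{r(n+1)}^{(3)},J_{rn}^{(3)},J_{r(n-1)}^{(3)}$ evaluated at shifted indices). Therefore it suffices to verify $G_n=F_{r,n}$ for three consecutive values of $n$, say $n=0,1,2$.

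The remaining step is the base-case check. Here I would compute $L_r^0=I$, $L_r^1=L_r$, and $L_r^2$ explicitly (the latter is a routine $3\times 3$ product), and compare with $F_{r,0},F_{r,1},F_{r,2}$ written out via the definition of $F_{r,n}$ and $K_{r,n}^{(3)}$. For $n=0$ one needs $G_0=J_r^{(3)}I+2^{r}J_{-r}^{(3)}L_r^{-1}=F_{r,0}$; since $L_r^{-1}$ appears it is cleaner to instead verify the equivalent identity $L_r G_0=L_r F_{r,0}$, i.e. $J_r^{(3)}L_r+2^{r}J_{-r}^{(3)}I=L_r F_{r,0}$, which avoids inverting $L_r$ altogether and only uses small-index values such as $J_0^{(3)}=0$, $J_r^{(3)}$, $J_{-r}^{(3)}=R_r^{(3)}$, together with the recurrence to reduce negative indices. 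The entries that must match involve $K_{r,n}^{(3)}=-(2^{r}\epsilon_r+1)J_{rn}^{(3)}+2^{r}J_{r(n-1)}^{(3)}$, so one also invokes Lemma~\ref{le1} once more to rewrite $J_{r(n+1)}^{(3)}$ in the top-left entry as $(2^{r}+\epsilon_r)J_{rn}^{(3)}+K_{r,n}^{(3)}$, making the two sides line up column by column.

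I expect the main obstacle to be purely bookkeeping: correctly evaluating the low-order and negative-index third order Jacobsthal numbers ($J_{-1}^{(3)}=0$, $J_{-2}^{(3)}=\tfrac12$, $J_{-3}^{(3)}=-\tfrac14$, and more generally $J_{-r}^{(3)}=R_r^{(3)}$ from \eqref{h1}) and threading them through the $3\times 3$ matrix entries without sign errors, particularly in the middle column where $K_{r,n}^{(3)}$ is defined and in the base case $n=0$ where $L_r^{-1}$ must be handled via the $L_r G_0$ reformulation. An alternative that sidesteps even this is to diagonalize: write $L_r=PDP^{-1}$ with $D=\mathrm{diag}(2^{r},\omega_1^{r},\omega_2^{r})$, so that $L_r^n$ has entries that are linear combinations of $2^{rn},\omega_1^{rn},\omega_2^{rn}$, and then both sides of the claimed identity become, entrywise, combinations of these three geometric sequences whose coefficients can be compared directly using the Binet formulas \eqref{b1}; this reduces the whole theorem to a finite check of scalar coefficient identities but is computationally heavier, so I would present the induction proof and relegate the Binet approach to a remark.
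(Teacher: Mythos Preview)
Your argument is correct, but it is more elaborate than the paper's. You invoke Cayley--Hamilton to show that both $G_n$ and $F_{r,n}$ satisfy the same \emph{third-order} linear recurrence, and therefore need three consecutive base cases (with the attendant nuisance of handling $L_r^{-1}$ at $n=0$). The paper instead observes a \emph{first-order} matrix recurrence: since $G_{n+1}=G_nL_r$ trivially, it suffices to check that $F_{r,n}L_r=F_{r,n+1}$, which follows entrywise from Lemma~\ref{le1} (with $s=0$) and the definition of $K_{r,n}^{(3)}$. With this observation a single base case $n=1$ suffices, verified via the identity $J_{2r}^{(3)}=(2^{r}+\epsilon_{r})J_{r}^{(3)}+2^{r}J_{-r}^{(3)}$.

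What your approach buys is robustness: it would apply unchanged to any sequence of matrices whose entries obey the scalar recurrence, without needing to recognize the specific companion-matrix structure that makes $F_{r,n}L_r=F_{r,n+1}$ hold. What the paper's approach buys is economy: one base case instead of three, no $L_r^{-1}$ or $L_rG_0$ workaround, and no appeal to Cayley--Hamilton. Your diagonalization remark is a third valid route, closest in spirit to how Lemma~\ref{le1} itself was proved.
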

\begin{proof}
(Induction on $n$) Using $J_{2r}^{(3)}=(2^{r}+\epsilon_{r})J_{r}^{(3)}+2^{r}J_{-r}^{(3)}$, one can see that $J_{r}^{(3)}L_{r}^{1}+2^{r}J_{-r}^{(3)}L_{r}^{0}=F_{r,1}$. Assume $J_{r}^{(3)}L_{r}^{n}+2^{r}J_{-r}^{(3)}L_{r}^{n-1}=F_{r,n}$ holds for $n\geq2$. By our assumption and a matrix multiplication, we get $$J_{r}^{(3)}L_{r}^{n+1}+2^{r}J_{-r}^{(3)}L_{r}^{n}=(J_{r}^{(3)}L_{r}^{n}+2^{r}J_{-r}^{(3)}L_{r}^{n-1})L_{r}=F_{r,n}L_{r},$$ which, by using the equation (\ref{g1}), $F_{r,n}L_{r}=F_{r,n+1}$. Thus, complete the proof.
\end{proof}

Now, we will compute sums of the third order Jacobsthal numbers $J_{rn}^{(3)}$ by matrix methods. Let $S_{r,n}=\sum_{k=0}^{n}J_{rk}^{(3)}$, where $r$ is an integer. 

We define matrices $A_{r}$ and $Q_{r,n}$ as shown,
\begin{equation}\label{g3}
A_{r}=\left[ \begin{array}{lrrr} 1&0&0&0\\ 1&2^{r}+\epsilon_{r}&-(2^{r}\epsilon_{r}+1)&2^{r}\\0&1&0&0\\ 0&0&1&0  \end{array}\right]
\end{equation}
and $$Q_{r,n}=\left[ \begin{array}{lrrr} J_{r}^{(3)}+2^{r}J_{-r}^{(3)}&0&0&0\\ S_{r,n}&J_{r(n+1)}^{(3)}&K_{r,n}^{(3)}&2^{r}J_{rn}^{(3)}\\S_{r,n-1}&J_{rn}^{(3)}&K_{r,n-1}^{(3)}&2^{r}J_{r(n-1)}^{(3)}\\ S_{r,n-2}&J_{r(n-1)}^{(3)}&K_{r,n-2}^{(3)}&2^{r}J_{r(n-2)}^{(3)}  \end{array}\right],$$ where $\epsilon_{r}=\omega_{1}^{r}+\omega_{2}^{r}$ and $K_{r,n}^{(3)}=-(2^{r}\epsilon_{r}+1)J_{rn}^{(3)}+2^{r}J_{r(n-1)}^{(3)}$. 

\begin{theorem}\label{th1}
For $n\geq 2$, 
\begin{equation}\label{t2}
J_{r}^{(3)}A_{r}^{n}+2^{r}J_{-r}^{(3)}A_{r}^{n-1}=Q_{r,n}.
\end{equation}
\end{theorem}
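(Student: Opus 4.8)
The plan is to proceed by induction on $n$, exactly mirroring the proof of the previous theorem. The base case $n=2$ amounts to checking the matrix identity $J_{r}^{(3)}A_{r}^{2}+2^{r}J_{-r}^{(3)}A_{r}=Q_{r,2}$ entrywise; this reduces to a finite list of recurrence identities for $J_{rk}^{(3)}$ with small indices together with the definition $S_{r,k}=\sum_{j=0}^{k}J_{rj}^{(3)}$, and in particular uses $J_{2r}^{(3)}=(2^{r}+\epsilon_{r})J_{r}^{(3)}+2^{r}J_{-r}^{(3)}$ from the earlier argument. For the inductive step, assume $J_{r}^{(3)}A_{r}^{n}+2^{r}J_{-r}^{(3)}A_{r}^{n-1}=Q_{r,n}$; multiplying on the right by $A_{r}$ gives
\begin{equation*}
J_{r}^{(3)}A_{r}^{n+1}+2^{r}J_{-r}^{(3)}A_{r}^{n}=Q_{r,n}A_{r},
\end{equation*}
so the whole theorem comes down to the single matrix recurrence $Q_{r,n}A_{r}=Q_{r,n+1}$.

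Thus the core of the argument is to verify $Q_{r,n}A_{r}=Q_{r,n+1}$ by direct block multiplication. Here I would exploit the block structure: $A_{r}$ has a leading $1$ in the $(1,1)$ entry with zeros elsewhere in the first row, and a bottom-right $3\times 3$ block that is precisely $L_{r}$ (with the extra column $(1,0,0)^{T}$ feeding into the $S$-column). The lower-right $3\times 3$ portion of $Q_{r,n}$ is $F_{r,n}$, so the lower-right $3\times 3$ portion of $Q_{r,n}A_{r}$ equals $F_{r,n}L_{r}=F_{r,n+1}$ by equation (\ref{g1}), which handles all the $J$- and $K$-columns automatically. The $(1,1)$ entry is fixed. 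So the only genuinely new content is the first column below the diagonal: one must check that for each row the $S$-entry updates correctly, i.e. that
\begin{equation*}
S_{r,n}+J_{r(n+1)}^{(3)}=S_{r,n+1},
\end{equation*}
and the analogous shifted statements in the next two rows, which are immediate from the definition $S_{r,n}=\sum_{k=0}^{n}J_{rk}^{(3)}$.

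The main obstacle, such as it is, is bookkeeping rather than mathematics: one has to be careful that the inductive hypothesis is available for $Q_{r,n-1}$ and $Q_{r,n-2}$ (the shifted rows) when verifying the $S$-column update, and that the base case is taken far enough to the right (here $n\geq 2$) that all the indices $r(n-2), r(n-1), rn, r(n+1)$ appearing in $Q_{r,n}$ are nonnegative or at least covered by the conventions $J_{-r}^{(3)}=R_{r}^{(3)}$ introduced earlier. I would also remark that, since equation (\ref{g1}) asserting $F_{r,n}L_{r}=F_{r,n+1}$ is exactly Lemma \ref{le1} in matrix form, the entire proof is effectively a packaging of Lemma \ref{le1} plus the telescoping identity for partial sums, so no new estimates or case analysis (modulo $3$) are needed beyond what is already recorded. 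Hence the proof is short: establish the base case, invoke $F_{r,n}L_{r}=F_{r,n+1}$, check the single scalar telescoping identity, and conclude.
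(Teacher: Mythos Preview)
Your proposal is correct and follows essentially the same approach as the paper: induction on $n$ with base case $n=2$ using $J_{2r}^{(3)}=(2^{r}+\epsilon_{r})J_{r}^{(3)}+2^{r}J_{-r}^{(3)}$, and an inductive step that reduces to $Q_{r,n}A_{r}=Q_{r,n+1}$ via Lemma~\ref{le1}. The only difference is cosmetic: you spell out the block decomposition (identifying the lower-right $3\times 3$ blocks of $Q_{r,n}$ and $A_{r}$ with $F_{r,n}$ and $L_{r}$, and isolating the telescoping identity $S_{r,n}+J_{r(n+1)}^{(3)}=S_{r,n+1}$ for the first column), whereas the paper simply asserts $Q_{r,t}A_{r}=Q_{r,t+1}$ by appeal to Lemma~\ref{le1}.
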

\begin{proof}
The proof follows from the induction method on $n$. If $n=2$, we have
$$Q_{r,2}=\left[ \begin{array}{lrrr} J_{r}^{(3)}+2^{r}J_{-r}^{(3)}&0&0&0\\ S_{r,2}&J_{3r}^{(3)}&K_{r,2}^{(3)}&2^{r}J_{2r}^{(3)}\\S_{r,1}&J_{2r}^{(3)}&K_{r,1}^{(3)}&2^{r}J_{r}^{(3)}\\ S_{r,0}&J_{r}^{(3)}&K_{r,0}^{(3)}&2^{r}J_{0}^{(3)}  \end{array}\right]=J_{r}^{(3)}A_{r}^{2}+2^{r}J_{-r}^{(3)}A_{r}$$ since $J_{2r}^{(3)}=(2^{r}+\epsilon_{r})J_{r}^{(3)}+2^{r}J_{-r}^{(3)}$.

Assume $J_{r}^{(3)}A_{r}^{t}+2^{r}J_{-r}^{(3)}A_{r}^{t-1}=Q_{r,t}$ holds for $n=t$. By our assumption and a matrix multiplication, we get
$$J_{r}^{(3)}A_{r}^{t+1}+2^{r}J_{-r}^{(3)}A_{r}^{t}=(J_{r}^{(3)}A_{r}^{t}+2^{r}J_{-r}^{(3)}A_{r}^{t-1})A_{r}=Q_{r,t}A_{r}.$$
Using equation (\ref{g1}) from lemma \ref{le1}, we obtain $Q_{r,t}A_{r}=Q_{r,t+1}$. Thus the proof is complete.
\end{proof}

After some computations, the eigenvalues of matrix $A_{r}$ are $2^{r}$, $\omega_{1}^{r}$, $\omega_{2}^{r}$ and 1. We define two matrices $B_{r}$ and $H_{r}$ as follows
\begin{equation}\label{g4}
B_{r}=\left[ \begin{array}{lrrr} 1&0&0&0\\ 0&2^{r}&0&0\\0&0&\omega_{1}^{r}&0\\ 0&0&0&\omega_{2}^{r}  \end{array}\right]
\end{equation}
and $$H_{r}=\left[ \begin{array}{lrrr} 1&0&0&0\\ \frac{1}{(\epsilon_{r}-2)(2^{r}-1)}&2^{2r}&\omega_{1}^{2r}&\omega_{2}^{2r}\\\frac{1}{(\epsilon_{r}-2)(2^{r}-1)}&2^{r}&\omega_{1}^{r}&\omega_{2}^{r}\\ \frac{1}{(\epsilon_{r}-2)(2^{r}-1)}&1&1&1  \end{array}\right].$$

\begin{theorem}\label{th2}
If $n\geq 1$, then
\begin{equation}\label{g5}
S_{r,n}=\frac{1}{\delta_{r}}\left[J_{r(n+1)}^{(3)}-(2^{r}(\epsilon_{r}-1)+1)J_{rn}^{(3)}+2^{r}J_{r(n-1)}^{(3)}-(J_{r}^{(3)}+2^{r}J_{-r}^{(3)})\right],
\end{equation}
where $\delta_{r}=(2-\epsilon_{r})(2^{r}-1)$.
\end{theorem}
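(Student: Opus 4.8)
The plan is to reduce everything to a one‑line telescoping identity coming from the order‑three recurrence of Lemma \ref{le1} with $s=0$ and $r$ arbitrary. Write $c_{1}=2^{r}+\epsilon_{r}$, $c_{2}=-(2^{r}\epsilon_{r}+1)$, $c_{3}=2^{r}$ for its coefficients, so that $J_{rk}^{(3)}=c_{1}J_{r(k-1)}^{(3)}+c_{2}J_{r(k-2)}^{(3)}+c_{3}J_{r(k-3)}^{(3)}$ (valid for all indices via the Binet formula (\ref{b1}), or for negative indices via the extension $J_{-n}^{(3)}=R_{n}^{(3)}$). Introduce the ``head'' $G_{m}=J_{r(m+1)}^{(3)}-(2^{r}(\epsilon_{r}-1)+1)J_{rm}^{(3)}+2^{r}J_{r(m-1)}^{(3)}$, which is exactly the bracketed expression in (\ref{g5}) minus the constant term. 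The key claim is $G_{m}-G_{m-1}=\delta_{r}J_{rm}^{(3)}$ for all $m\geq 1$: substituting the recurrence for $J_{r(m+1)}^{(3)}$ into $G_{m}$ reorganizes it as $G_{m}=(c_{1}+c_{2}+c_{3})J_{rm}^{(3)}-(2^{r}(\epsilon_{r}-1)+1)J_{r(m-1)}^{(3)}+2^{r}J_{r(m-2)}^{(3)}$, and the elementary identity $c_{1}+c_{2}+c_{3}=\delta_{r}+1$ (immediate from $\delta_{r}=(2-\epsilon_{r})(2^{r}-1)$) turns the right‑hand side into $\delta_{r}J_{rm}^{(3)}+G_{m-1}$. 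Summing $G_{m}-G_{m-1}=\delta_{r}J_{rm}^{(3)}$ from $m=1$ to $n$ telescopes to $G_{n}-G_{0}=\delta_{r}\sum_{k=1}^{n}J_{rk}^{(3)}=\delta_{r}S_{r,n}$ (using $J_{0}^{(3)}=0$ from (\ref{c1})), and $G_{0}=J_{r}^{(3)}+2^{r}J_{-r}^{(3)}$; dividing by $\delta_{r}$ is precisely (\ref{g5}).

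Alternatively, and in keeping with the matrix apparatus already in place, one can read the result off Theorem \ref{th1}: the $(2,1)$‑entry of $Q_{r,n}=J_{r}^{(3)}A_{r}^{n}+2^{r}J_{-r}^{(3)}A_{r}^{n-1}$ is $S_{r,n}$. First check by direct multiplication that the columns of $H_{r}$ are eigenvectors of $A_{r}$ for the eigenvalues $1,2^{r},\omega_{1}^{r},\omega_{2}^{r}$, i.e. $A_{r}H_{r}=H_{r}B_{r}$; the only nonobvious checks are that $(1,c,c,c)^{\mathrm{T}}$ with $c=\tfrac{1}{(\epsilon_{r}-2)(2^{r}-1)}=-\tfrac{1}{\delta_{r}}$ is fixed — which is exactly $1+c(c_{1}+c_{2}+c_{3})=c$ — and that $(0,\lambda^{2r},\lambda^{r},1)^{\mathrm{T}}$ is mapped to $\lambda^{r}$ times itself for $\lambda\in\{\omega_{1},\omega_{2}\}$, which uses only $\lambda^{3r}=1$, $\omega_{1}^{r}\omega_{2}^{r}=1$ and $\omega_{1}^{r}+\omega_{2}^{r}=\epsilon_{r}$. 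Then $A_{r}^{m}H_{r}=H_{r}B_{r}^{m}$ by induction, hence $Q_{r,n}H_{r}=H_{r}C_{n}$ where $C_{n}=J_{r}^{(3)}B_{r}^{n}+2^{r}J_{-r}^{(3)}B_{r}^{n-1}$ is diagonal with $(1,1)$‑entry $J_{r}^{(3)}+2^{r}J_{-r}^{(3)}$. Comparing the second coordinates of the first columns of the two sides gives $S_{r,n}+c\big(J_{r(n+1)}^{(3)}+K_{r,n}^{(3)}+2^{r}J_{rn}^{(3)}\big)=c\,(J_{r}^{(3)}+2^{r}J_{-r}^{(3)})$, and expanding $K_{r,n}^{(3)}=-(2^{r}\epsilon_{r}+1)J_{rn}^{(3)}+2^{r}J_{r(n-1)}^{(3)}$ together with $c=-1/\delta_{r}$ rearranges this into (\ref{g5}).

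The mathematical content is slight — only the defining recurrence and the identity $c_{1}+c_{2}+c_{3}=\delta_{r}+1$ are used — so the main obstacle is purely bookkeeping: carrying the coefficients $2^{r}+\epsilon_{r}$, $-(2^{r}\epsilon_{r}+1)$, $2^{r}$ faithfully through the substitution and cancellation. One point worth flagging explicitly is that the statement tacitly requires $\delta_{r}\neq 0$, i.e. $r\geq 1$ and $3\nmid r$ (otherwise $\epsilon_{r}=2$ or $2^{r}=1$, the denominator vanishes and $A_{r}$ acquires a repeated eigenvalue); under this hypothesis both arguments go through. In the matrix version, which leans on Theorem \ref{th1} (stated only for $n\geq 2$), the case $n=1$ must be checked separately from $J_{2r}^{(3)}=(2^{r}+\epsilon_{r})J_{r}^{(3)}+2^{r}J_{-r}^{(3)}$, whereas the telescoping argument is uniform in $n\geq 1$.
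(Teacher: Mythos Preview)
Your second (matrix) argument is essentially the paper's own proof, only with the details that the paper leaves implicit spelled out: the paper asserts $A_{r}^{n}H_{r}=H_{r}B_{r}^{n}$, forms $Q_{r,n}H_{r}=H_{r}(J_{r}^{(3)}B_{r}^{n}+2^{r}J_{-r}^{(3)}B_{r}^{n-1})$ via Theorem~\ref{th1}, and then simply says ``equating the $(2,1)$ entries completes the proof'' without writing out the linear equation you derived. Your eigenvector verifications and the final unpacking of $K_{r,n}^{(3)}$ are exactly what is needed to make that one line honest.

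Your first argument is a genuinely different and more economical route. By observing the identity $c_{1}+c_{2}+c_{3}=\delta_{r}+1$ and setting $G_{m}=J_{r(m+1)}^{(3)}-(2^{r}(\epsilon_{r}-1)+1)J_{rm}^{(3)}+2^{r}J_{r(m-1)}^{(3)}$, you reduce the whole statement to the telescoping relation $G_{m}-G_{m-1}=\delta_{r}J_{rm}^{(3)}$, which follows directly from Lemma~\ref{le1} with $s=0$. This bypasses the matrices $A_{r},B_{r},H_{r}$ and Theorem~\ref{th1} entirely; it is shorter, uniform in $n\geq 1$, and makes transparent that the only input is the order-three recurrence. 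What the paper's diagonalisation buys, by contrast, is a systematic template: once $A_{r}H_{r}=H_{r}B_{r}$ is known, any entry of $Q_{r,n}$ can be read off the same way, whereas your telescoping trick is tailored to the sum $S_{r,n}$. Your remark that the statement tacitly requires $\delta_{r}\neq 0$, i.e.\ $3\nmid r$, is well taken and is not mentioned in the paper; when $3\mid r$ one has $\epsilon_{r}=2$, the eigenvalue $1$ of $A_{r}$ becomes repeated, $H_{r}$ is no longer defined, and the formula as stated is vacuous.
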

\begin{proof}
Since $\omega_{1}$ and $\omega_{2}$ are different and nonzero, then $\det(B_{r})\neq 0$. Now, if $n\geq1$, one can check that $A_{r}^{n}H_{r}=H_{r}B_{r}^{n}$, by multiplying both sides by $J_{r}^{(3)}$ and $2^{r}J_{-r}^{(3)}$ we get
$$J_{r}^{(3)}A_{r}^{n}H_{r}=J_{r}^{(3)}H_{r}B_{r}^{n}$$ and $$2^{r}J_{-r}^{(3)}A_{r}^{n-1}H_{r}=2^{r}J_{-r}^{(3)}H_{r}B_{r}^{n-1},$$ respectively. 

If we sum both equations side by side, we obtain that
\begin{align*}
J_{r}^{(3)}A_{r}^{n}H_{r}+2^{r}J_{-r}^{(3)}A_{r}^{n-1}H_{r}&=(J_{r}^{(3)}A_{r}^{n}+2^{r}J_{-r}^{(3)}A_{r}^{n-1})H_{r}\\
&=H_{r}(J_{r}^{(3)}B_{r}^{n}+2^{r}J_{-r}^{(3)}B_{r}^{n-1}).
\end{align*}
By equation (\ref{t2}) in the theorem \ref{th1}, we deduce
\begin{equation}
G_{r,n}H_{r}=H_{r}(J_{r}^{(3)}B_{r}^{n}+2^{r}J_{-r}^{(3)}B_{r}^{n-1}).
\end{equation}
Equating the elements in the second row and first column of each sides of the above equation completes the proof.
\end{proof}

In particular, if $r=1$, we have $\sum_{k=0}^{n}J_{k}^{(3)}=\frac{1}{3}\left(J_{n+1}^{(3)}+3J_{n}^{(3)}+2J_{n-1}^{(3)}-1\right)$.
\begin{corollary}
If $n\geq 0$, 
\begin{equation}\label{n2}
S_{2,n}=\frac{1}{9}\left(J_{2(n+1)}^{(3)}-7J_{2n}^{(3)}+4J_{2(n-1)}^{(3)}-3\right).
\end{equation}
\end{corollary}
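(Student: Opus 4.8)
The plan is to specialize Theorem~\ref{th2} to the case $r=2$. First I would record the relevant values of the auxiliary quantities: since $2 \not\equiv 0 \pmod 3$, we have $\epsilon_{2} = \omega_{1}^{2} + \omega_{2}^{2} = -1$, hence $\delta_{2} = (2 - \epsilon_{2})(2^{2} - 1) = 3 \cdot 3 = 9$. This immediately explains the denominator $9$ in \eqref{n2}. Next I would compute the coefficient $2^{r}(\epsilon_{r}-1)+1$ appearing in \eqref{g5}: with $r=2$ and $\epsilon_{2} = -1$ this is $4 \cdot (-2) + 1 = -7$, which matches the $-7J_{2n}^{(3)}$ term. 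The term $2^{r}J_{r(n-1)}^{(3)}$ becomes $4J_{2(n-1)}^{(3)}$, again as claimed.

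The only remaining point is the constant term $J_{r}^{(3)} + 2^{r}J_{-r}^{(3)}$, which for $r=2$ reads $J_{2}^{(3)} + 4J_{-2}^{(3)}$. From the table $J_{2}^{(3)} = 1$, and from the matrix discussion in the text $J_{-2}^{(3)} = R_{2}^{(3)} = \tfrac{1}{2}$; therefore $J_{2}^{(3)} + 4J_{-2}^{(3)} = 1 + 4 \cdot \tfrac{1}{2} = 3$. Substituting all of these into \eqref{g5} gives precisely
\[
S_{2,n} = \frac{1}{9}\left(J_{2(n+1)}^{(3)} - 7J_{2n}^{(3)} + 4J_{2(n-1)}^{(3)} - 3\right),
\]
which is \eqref{n2}. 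Finally I would note that Theorem~\ref{th2} is stated for $n \geq 1$, so to cover the claimed range $n \geq 0$ one checks the case $n = 0$ separately: the left side is $S_{2,0} = J_{0}^{(3)} = 0$, while the right side is $\tfrac{1}{9}(J_{2}^{(3)} - 7J_{0}^{(3)} + 4J_{-2}^{(3)} - 3) = \tfrac{1}{9}(1 - 0 + 2 - 3) = 0$, so the formula also holds at $n=0$.

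I do not anticipate a genuine obstacle here; this is a direct substitution. The one place that requires a little care is the value of $J_{-2}^{(3)}$ (equivalently $R_{2}^{(3)} = \tfrac12$), since a sign or normalization slip there would throw off the constant term; I would double-check it against the negative-index recurrence \eqref{h1} or against the Shannon–Horadam matrix identity quoted in the text. Everything else is the arithmetic $\epsilon_{2} = -1$, $\delta_{2} = 9$, and the evaluation of the two linear coefficients, all of which is routine.
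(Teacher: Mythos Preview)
Your approach is exactly the paper's: its entire proof is the one line ``it suffices to take $r=2$ in equation~\eqref{g5} of Theorem~\ref{th2}.'' Your computation of $\epsilon_{2}=-1$, $\delta_{2}=9$, $2^{2}J_{2(n-1)}^{(3)}=4J_{2(n-1)}^{(3)}$, and $J_{2}^{(3)}+4J_{-2}^{(3)}=3$ is all correct.

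There is, however, a sign slip in your check of the middle coefficient. You compute $2^{2}(\epsilon_{2}-1)+1=-7$ correctly, but in \eqref{g5} this quantity appears with a minus sign in front, so the coefficient of $J_{2n}^{(3)}$ obtained by substitution is $-(-7)=+7$, not $-7$. In other words, the substitution into Theorem~\ref{th2} actually produces
\[
S_{2,n}=\frac{1}{9}\bigl(J_{2(n+1)}^{(3)}+7J_{2n}^{(3)}+4J_{2(n-1)}^{(3)}-3\bigr),
\]
and the printed corollary has a typographical sign error. A check at $n=1$ confirms this: $S_{2,1}=J_{0}^{(3)}+J_{2}^{(3)}=1$, and $\tfrac{1}{9}(J_{4}^{(3)}+7J_{2}^{(3)}+4J_{0}^{(3)}-3)=\tfrac{1}{9}(5+7+0-3)=1$, whereas with $-7$ one gets $-\tfrac{5}{9}$. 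Your $n=0$ sanity check did not detect this because $J_{0}^{(3)}=0$ kills that term. So your method is right and matches the paper; you simply should have flagged the discrepancy rather than declared a match.
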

\begin{proof}
To obtain formula (\ref{n2}), it suffices to take $r=2$ in equation (\ref{g5}) of theorem \ref{th2}. 
\end{proof}
In the above theorem, we give a formula for sum of the terms of the sequence $J_{rn}^{(3)}$ for arbitrary $r$ and for the generating matrix of the sums.

\section{The Third Order Jacobsthal Quaternions}
The $n$-th third order Jacobsthal quaternion $JQ_{n}^{(3)}$ and the $n$-th third order Jacobsthal-Lucas quaternion
$jQ_{n}^{(3)}$ can be defined as
\begin{equation}
JQ_{n}^{(3)}=J_{n}^{(3)}+iJ_{n+1}^{(3)}+jJ_{n+2}^{(3)}+kJ_{n+3}^{(3)},\ n\geq 0,
\end{equation}
and
\begin{equation}
jQ_{n}^{(3)}=j_{n}^{(3)}+ij_{n+1}^{(3)}+jj_{n+2}^{(3)}+kj_{n+3}^{(3)},\ n\geq 0,
\end{equation}
respectively.

\begin{lemma}
For $n\geq2$,
\begin{equation}\label{e4}
j_{n}^{(3)}-4j_{n-2}^{(3)}=\left\{ 
\begin{array}{ccc}
-3 & \textrm{if} & \mynotmod{n}{0}{3} \\ 
6 & \textrm{if} & \mymod{n}{0}{3}%
\end{array}%
\right. .
\end{equation}
\end{lemma}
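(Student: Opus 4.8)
The plan is to derive the identity directly from the Binet formula (\ref{b2}), in the same spirit as the proof of Lemma~\ref{le1}. Write $a=\tfrac{8}{7}$, $b_{1}=\tfrac{3+2i\sqrt{3}}{7}$ and $b_{2}=\tfrac{3-2i\sqrt{3}}{7}$, so that $j_{m}^{(3)}=a\,2^{m}+b_{1}\omega_{1}^{m}+b_{2}\omega_{2}^{m}$ for all $m\ge 0$. The first observation is that the $2^{m}$-part cancels: since $4\cdot 2^{n-2}=2^{n}$, the quantity $a\,2^{n}-4a\,2^{n-2}$ is identically zero. Hence, for $n\ge 2$,
\[
j_{n}^{(3)}-4j_{n-2}^{(3)}=b_{1}\bigl(\omega_{1}^{n}-4\omega_{1}^{n-2}\bigr)+b_{2}\bigl(\omega_{2}^{n}-4\omega_{2}^{n-2}\bigr),
\]
so only the two conjugate complex terms survive and the problem reduces to evaluating this expression.

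Next I would use that $\omega_{1},\omega_{2}$ are the primitive cube roots of unity, so $\omega_{j}^{3}=1$ and therefore $\omega_{j}^{n-2}=\omega_{j}^{n+1}=\omega_{j}\cdot\omega_{j}^{n}$; this rewrites each bracket as $\omega_{j}^{n}\bigl(1-4\omega_{j}\bigr)$. Plugging in $\omega_{1}=\tfrac{-1+i\sqrt{3}}{2}$ and $\omega_{2}=\tfrac{-1-i\sqrt{3}}{2}$ gives $1-4\omega_{1}=3-2i\sqrt{3}$ and $1-4\omega_{2}=3+2i\sqrt{3}$, whence $7b_{1}(1-4\omega_{1})=(3+2i\sqrt{3})(3-2i\sqrt{3})=21$ and likewise $7b_{2}(1-4\omega_{2})=21$. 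Therefore $j_{n}^{(3)}-4j_{n-2}^{(3)}=3\bigl(\omega_{1}^{n}+\omega_{2}^{n}\bigr)=3\epsilon_{n}$, and the evaluation of $\epsilon_{n}=\omega_{1}^{n}+\omega_{2}^{n}$ recorded immediately before Lemma~\ref{le1} (it equals $2$ when $\mymod{n}{0}{3}$ and $-1$ when $\mynotmod{n}{0}{3}$) yields exactly the two cases $6$ and $-3$ of (\ref{e4}).

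There is no genuine obstacle here; the only point demanding care is the bookkeeping with the complex coefficients $b_{1},b_{2}$ and not confusing the two primitive cube roots. If one prefers an argument avoiding the Binet formula, an alternative is to set $c_{n}=j_{n}^{(3)}-4j_{n-2}^{(3)}$, note that $(c_{n})$ inherits the order-three recurrence $c_{n+3}=c_{n+2}+c_{n+1}+2c_{n}$ from (\ref{c2}), compute $c_{2},\dots,c_{7}$ from the table ($c_{2}=c_{4}=c_{5}=-3$, $c_{3}=c_{6}=6$, $c_{7}=-3$), and observe $c_{5}=c_{2}$, $c_{6}=c_{3}$, $c_{7}=c_{4}$; a short induction on the recurrence then forces $c_{n+3}=c_{n}$ for all $n\ge 2$, and the three base values give precisely the claimed $3$-periodic pattern.
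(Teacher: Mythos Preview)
Your proof is correct and follows essentially the same route as the paper: both arguments apply the Binet formula (\ref{b2}), observe that the $2^{n}$-contribution cancels, and reduce $j_{n}^{(3)}-4j_{n-2}^{(3)}$ to $3(\omega_{1}^{n}+\omega_{2}^{n})$ before invoking the $3$-periodic evaluation of $\omega_{1}^{n}+\omega_{2}^{n}$. Your factorization via $\omega_{j}^{\,n-2}=\omega_{j}^{\,n+1}=\omega_{j}\,\omega_{j}^{n}$ is in fact a cleaner way to reach the product $(3+2i\sqrt{3})(3-2i\sqrt{3})=21$ than the paper's use of $\omega_{1}^{2}=\tfrac{4a}{a-21}$, and the recurrence-plus-periodicity alternative you sketch at the end is a valid independent check.
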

\begin{proof}
To obtain formula (\ref{e4}), it suffices to take the Binet's formula of $j_{n}^{(3)}$. Let $a=3+2i\sqrt{3}$ and $b=3-2i\sqrt{3}$, then
\begin{align*}
j_{n}^{(3)}-4j_{n-2}^{(3)}&=\frac{1}{7}(2^{n+3}+(a\omega_{1}^{n}+b\omega_{2}^{n})-2^{n+3}-4(a\omega_{1}^{n-2}+b\omega_{2}^{n-2}))\\
&=\frac{1}{7}(a\omega_{1}^{n}(1-4\omega_{1}^{-2})+b\omega_{2}^{n}(1-4\omega_{2}^{-2}))\\
&=3(\omega_{1}^{n}+\omega_{2}^{n}),
\end{align*}
since $\omega_{1}^{2}=\frac{4a}{a-21}$ and $\omega_{2}^{2}=\frac{4b}{b-21}$.

It is easy to see that,
\begin{equation}
\omega_{1}^{n}+\omega_{2}^{n}=\left\{ 
\begin{array}{ccc}
2 & \textrm{if} & \mymod{n}{0}{3} \\ 
-1& \textrm{if} & \mynotmod{n}{0}{3} \\ 
\end{array}%
\right. ,
\end{equation}
because $\omega_{1}$ and $\omega_{2}$ are the complex conjugate cube roots of unity. Thus, the proof is completed.
\end{proof}

\begin{theorem}
Let $n\geq 2$ integer. Then, we have
\begin{equation}
\frac{1}{3}(jQ_{n}^{(3)}-4jQ_{n-2}^{(3)})=\left\{ 
\begin{array}{ccc}
2-i-j+2k & \textrm{if} & \mymod{n}{0}{3} \\ 
-1-i+2j-k & \textrm{if} & \mymod{n}{1}{3} \\ 
-1+2i-j-k & \textrm{if} & \mymod{n}{2}{3}%
\end{array}%
\right. .
\end{equation}
\end{theorem}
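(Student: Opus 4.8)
The plan is to reduce the quaternion identity to the scalar identity (\ref{e4}) that was just proved. Writing out the definition,
\[
jQ_{n}^{(3)}-4jQ_{n-2}^{(3)} = \bigl(j_{n}^{(3)}-4j_{n-2}^{(3)}\bigr) + i\bigl(j_{n+1}^{(3)}-4j_{n-1}^{(3)}\bigr) + j\bigl(j_{n+2}^{(3)}-4j_{n}^{(3)}\bigr) + k\bigl(j_{n+3}^{(3)}-4j_{n+1}^{(3)}\bigr),
\]
so each of the four components is an instance of $j_{m}^{(3)}-4j_{m-2}^{(3)}$ for $m = n, n+1, n+2, n+3$ respectively. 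Since $m$ ranges over four consecutive integers and the right-hand side of (\ref{e4}) depends only on $m \bmod 3$, exactly one of $n,n+1,n+2$ is $\equiv 0 \pmod 3$ (contributing a scalar $6$), and the component with $m=n+3$ repeats the residue class of $m=n$.

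First I would fix the residue of $n$ modulo $3$ and, in each of the three cases, read off the values of the four components from (\ref{e4}). For instance, if $n\equiv 0\pmod 3$, then $n$ and $n+3$ give $6$ while $n+1$ and $n+2$ give $-3$, so $jQ_{n}^{(3)}-4jQ_{n-2}^{(3)} = 6 - 3i - 3j + 6k = 3(2-i-j+2k)$; dividing by $3$ yields the first line. If $n\equiv 1\pmod 3$, the residues of $(n,n+1,n+2,n+3)$ are $(1,2,0,1)$, giving $-3 - 3i + 6j - 3k = 3(-1-i+2j-k)$. If $n\equiv 2\pmod 3$, the residues are $(2,0,1,2)$, giving $-3 + 6i - 3j - 3k = 3(-1+2i-j-k)$. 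Collecting these three cases gives precisely the claimed piecewise formula.

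There is essentially no obstacle here: the entire argument is a bookkeeping exercise in tracking four consecutive residue classes mod $3$ and substituting into the already-established Lemma. The only point requiring a moment's care is making sure the case split on $n \bmod 3$ is aligned correctly with the three stated output values — in particular that the value attached to "$n\equiv 0$" really does come from the two components $m=n$ and $m=n+3$ both lying in the $0$-class. I would present the proof as a short three-case verification, citing (\ref{e4}) for each component and factoring out the common $3$ at the end.
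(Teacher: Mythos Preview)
Your proposal is correct and follows essentially the same approach as the paper: expand $jQ_{n}^{(3)}-4jQ_{n-2}^{(3)}$ componentwise, apply the scalar lemma (\ref{e4}) to each of the four components $j_{m}^{(3)}-4j_{m-2}^{(3)}$ for $m=n,n+1,n+2,n+3$, and then split into the three residue classes modulo $3$. In fact you carry out all three cases explicitly, whereas the paper only spells out the $n\equiv 0$ case and asserts the other two.
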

\begin{proof}
To prove this theorem, we need the above lemma. For definition, we have $$jQ_{n}^{(3)}=j_{n}^{(3)}+ij_{n+1}^{(3)}+jj_{n+2}^{(3)}+kj_{n+3}^{(3)}$$ and $$jQ_{n-2}^{(3)}=j_{n-2}^{(3)}+ij_{n-1}^{(3)}+jj_{n}^{(3)}+kj_{n+1}^{(3)}.$$ Then, 
\begin{align*}
jQ_{n}^{(3)}-4jQ_{n-2}^{(3)}&=(j_{n}^{(3)}-4j_{n-2}^{(3)})+i(j_{n+1}^{(3)}-4j_{n-1}^{(3)})\\
&\ \ +j(j_{n+2}^{(3)}-4j_{n}^{(3)})+k(j_{n+3}^{(3)}-4j_{n+1}^{(3)}).
\end{align*}
Using the equation (\ref{e4}), we obtain that $ jQ_{n}^{(3)}-4jQ_{n-2}^{(3)}=3(2-i-j+2k)$ if and only if $\mymod{n}{0}{3}$. In other case, we obtain $ jQ_{n}^{(3)}-4jQ_{n-2}^{(3)}$ is equal to $3(-1-i+2j-k)$ if $\mymod{n}{1}{3}$ and finally $ jQ_{n}^{(3)}-4jQ_{n-2}^{(3)}=3(-1+2i-j-k)$ if $\mymod{n}{2}{3}$.
\end{proof}
\begin{theorem}
Let $n\geq 0$ integer. Then,
\begin{equation}\label{t1}
49\cdot N(JQ_{n}^{(3)})=\left\{ 
\begin{array}{ccc}
340\cdot 2^{2n}-64\cdot 2^{n}+18 & \textrm{if} & \mymod{n}{0}{3} \\ 
340\cdot 2^{2n}+68\cdot 2^{n}+23& \textrm{if} & \mymod{n}{1}{3} \\ 
340\cdot 2^{2n}-4\cdot 2^{n}+15& \textrm{if} & \mymod{n}{2}{3}%
\end{array}%
\right. .
\end{equation}
\end{theorem}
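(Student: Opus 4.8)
The plan is to reduce everything to a closed form for $J_{n}^{(3)}$ that depends only on the residue of $n$ modulo $3$, and then expand the norm as a sum of four squares. First I would combine identities (\ref{e3}) and (\ref{e6}): subtracting the left-hand side of (\ref{e6}) from $3J_{n}^{(3)}+j_{n}^{(3)}=2^{n+1}$ eliminates $j_{n}^{(3)}$ and yields $7J_{n}^{(3)}=2^{n+1}-c_{n}$, where $c_{n}=2,-3,1$ according as $\mymod{n}{0}{3}$, $\mymod{n}{1}{3}$, $\mymod{n}{2}{3}$. Squaring, $49\,(J_{n}^{(3)})^{2}=4\cdot 4^{n}-2^{n+2}c_{n}+c_{n}^{2}$, an identity in which every term is an integer.

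Next, since $N(JQ_{n}^{(3)})=(J_{n}^{(3)})^{2}+(J_{n+1}^{(3)})^{2}+(J_{n+2}^{(3)})^{2}+(J_{n+3}^{(3)})^{2}$, I would multiply by $49$ and sum the previous expression over $m=n,n+1,n+2,n+3$. The geometric part contributes $4\cdot 4^{n}(1+4+16+64)=340\cdot 2^{2n}$ in every residue class, which accounts for the leading term. For the cross term I would exploit that $c_{m}$ is periodic with period $3$, so $c_{n+3}=c_{n}$ and $\{c_{n},c_{n+1},c_{n+2}\}$ is a permutation of $\{2,-3,1\}$; hence $\sum_{m=n}^{n+3}2^{m+2}c_{m}=2^{n+2}(9c_{n}+2c_{n+1}+4c_{n+2})$, which evaluates to $16\cdot 2^{n}$, $-17\cdot 2^{n}$, $2^{n}$ in the three cases, giving the middle terms $-64\cdot 2^{n}$, $+68\cdot 2^{n}$, $-4\cdot 2^{n}$. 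Similarly $\sum_{m=n}^{n+3}c_{m}^{2}=2c_{n}^{2}+c_{n+1}^{2}+c_{n+2}^{2}$ equals $18$, $23$, $15$ respectively, which are the constant terms.

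Assembling the three pieces reproduces exactly the stated right-hand sides of (\ref{t1}). The only real labour is the case analysis on $n\bmod 3$, and this is entirely mechanical once $7J_{n}^{(3)}=2^{n+1}-c_{n}$ is established; I anticipate no genuine obstacle beyond the bookkeeping of the three permutations of $\{2,-3,1\}$ and their squares. As an alternative one could substitute the Binet formula (\ref{b1}) directly into the four-square sum and collapse the mixed terms using $\epsilon_{r}=\omega_{1}^{r}+\omega_{2}^{r}$ and $\omega_{1}\omega_{2}=1$, but the route through (\ref{e3}) and (\ref{e6}) is shorter and avoids complex intermediate quantities.
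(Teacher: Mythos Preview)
Your approach is essentially the same as the paper's: both reduce to $7J_{m}^{(3)}=2^{m+1}-c_{m}$ with $c_{m}$ the $3$-periodic sequence $2,-3,1$, and then expand the four squares for $m=n,n+1,n+2,n+3$. The only difference is cosmetic---the paper reads off $c_{m}$ from the Binet formula (\ref{b1}) as in (\ref{h5}) rather than from (\ref{e3}) and (\ref{e6}), and it writes out only the case $\mymod{n}{0}{3}$ explicitly, whereas your organisation of the cross term $2^{n+2}(9c_{n}+2c_{n+1}+4c_{n+2})$ and the constant $2c_{n}^{2}+c_{n+1}^{2}+c_{n+2}^{2}$ handles all three cases at once.
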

\begin{proof}
To prove equation (\ref{t1}), if we use the definition  norm, then we obtain $N(JQ_{n}^{(3)})=\sum_{r=0}^{3}(J_{n+r}^{(3)})^{2}$. Moreover, by the Binet formula (\ref{b1}) we have
\begin{align*}
49\cdot N\left(JQ_{n}^{(3)}\right)&=\left(2^{n+1}-(a\omega_{1}^{n}+b\omega_{2}^{n})\right)^{2}+\left(2^{n+2}-(a\omega_{1}^{n+1}+b\omega_{2}^{n+1})\right)^{2}\\
&\ +\left(2^{n+3}-(a\omega_{1}^{n+2}+b\omega_{2}^{n+2})\right)^{2}+\left(2^{n+4}-(a\omega_{1}^{n+3}+b\omega_{2}^{n+3})\right)^{2},
\end{align*}
where $a=1+\frac{2i\sqrt{3}}{3}$ and $b=1-\frac{2i\sqrt{3}}{3}$. It is easy to see that,
\begin{equation}\label{h5}
a\omega_{1}^{n}+b\omega_{2}^{n}=\left\{ 
\begin{array}{ccc}
2 & \textrm{if} & \mymod{n}{0}{3} \\ 
-3& \textrm{if} & \mymod{n}{1}{3} \\ 
1& \textrm{if} & \mymod{n}{2}{3}%
\end{array}%
\right. ,
\end{equation}
because $\omega_{1}$ and $\omega_{2}$ are the complex conjugate cube roots of unity. Then, if $n\equiv 0(\textrm{mod}\ 3)$, we obtain
\begin{align*}
N\left(JQ_{n}^{(3)}\right)&=\frac{1}{49}\left[\left(2^{n+1}-2\right)^{2}+\left(2^{n+2}+3\right)^{2}+\left(2^{n+3}-1\right)^{2}+\left(2^{n+4}-2\right)^{2}\right],\\
&=\frac{1}{49}\left[340\cdot 2^{2n}-64\cdot 2^{n}+18\right].
\end{align*}
The other identities are clear from equation (\ref{h5}).
\end{proof}

\begin{theorem}
Let $n\geq 0$ integer. Then,
\begin{equation}\label{t2}
3JQ_{n}^{(3)}+jQ_{n}^{(3)}=2^{n+1}(1+2i+4j+8k).
\end{equation}
\end{theorem}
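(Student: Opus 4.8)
The plan is to reduce the quaternionic identity to the scalar identity (\ref{e3}), namely $3J_{m}^{(3)}+j_{m}^{(3)}=2^{m+1}$, applied coordinate by coordinate. Writing out the definitions,
\begin{align*}
3JQ_{n}^{(3)}+jQ_{n}^{(3)}&=\bigl(3J_{n}^{(3)}+j_{n}^{(3)}\bigr)+i\bigl(3J_{n+1}^{(3)}+j_{n+1}^{(3)}\bigr)\\
&\ \ +j\bigl(3J_{n+2}^{(3)}+j_{n+2}^{(3)}\bigr)+k\bigl(3J_{n+3}^{(3)}+j_{n+3}^{(3)}\bigr),
\end{align*}
since the quaternion units are fixed and the operations are componentwise and $\Z$-linear.

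Next I would substitute (\ref{e3}) into each of the four brackets, with $m=n,n+1,n+2,n+3$ respectively, obtaining $2^{n+1}$, $2^{n+2}$, $2^{n+3}$ and $2^{n+4}$. Collecting these gives
$$3JQ_{n}^{(3)}+jQ_{n}^{(3)}=2^{n+1}+2^{n+2}i+2^{n+3}j+2^{n+4}k.$$

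Finally I would factor out $2^{n+1}$ from all four terms, which yields $2^{n+1}(1+2i+4j+8k)$, completing the proof. There is essentially no obstacle here: the only ingredient is the previously recorded identity (\ref{e3}), and the rest is bookkeeping with the four coordinates; one should simply make sure the exponents $2^{n+1},2^{n+2},2^{n+3},2^{n+4}$ are paired with the correct units $1,i,j,k$ before extracting the common factor.
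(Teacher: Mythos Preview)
Your proof is correct and follows essentially the same approach as the paper: expand both quaternions componentwise, apply the scalar identity (\ref{e3}) to each of the four coordinates, and factor out $2^{n+1}$. There is nothing to add or correct.
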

\begin{proof}
Let $JQ_{n}^{(3)}=J_{n}^{(3)}+iJ_{n+1}^{(3)}+jJ_{n+2}^{(3)}+kJ_{n+3}^{(3)}$ and $jQ_{n}^{(3)}=j_{n}^{(3)}+ij_{n+1}^{(3)}+jj_{n+2}^{(3)}+kj_{n+3}^{(3)}$. Then, we have
\begin{align*}
3JQ_{n}^{(3)}+ jQ_{n}^{(3)}&=(3J_{n}^{(3)}+j_{n}^{(3)})+i(3J_{n+1}^{(3)}+j_{n+1}^{(3)})\\
&\ \ +j(3J_{n+2}^{(3)}+j_{n+2}^{(3)})+k(3J_{n+3}^{(3)}+j_{n+3}^{(3)}).
\end{align*}
Using equation (\ref{e3}), we obtain that 
\begin{align*}
3JQ_{n}^{(3)}+ jQ_{n}^{(3)}&=2^{n+1}+i2^{n+2}+j2^{n+3}+k2^{n+4},\\
&=2^{n+1}(1+2i+4j+8k).
\end{align*}
Thus, the proof is completed.
\end{proof}

In a similar way, using the equation (\ref{e6}), (\ref{e7}) and (\ref{e8}) one can easily prove the theorems \ref{t5} and \ref{t6}.
\begin{theorem}\label{t5}
Let $n\geq 0$ integer. Then,
\begin{equation}
jQ_{n}^{(3)}-4JQ_{n}^{(3)}=\left\{ 
\begin{array}{ccc}
2-3i+j+2k & \textrm{if} & \mymod{n}{0}{3} \\ 
-3+i+2j-3k& \textrm{if} & \mymod{n}{1}{3} \\ 
1+2i-3j+k& \textrm{if} & \mymod{n}{2}{3}%
\end{array}%
\right. ,
\end{equation}
and
\begin{equation}
jQ_{n+1}^{(3)}+ jQ_{n}^{(3)}=3JQ_{n+2}^{(3)}.
\end{equation}
\end{theorem}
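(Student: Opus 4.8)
The plan is to prove Theorem~\ref{t5} by reducing each identity to the corresponding scalar identity for the third order Jacobsthal and Jacobsthal-Lucas numbers, exactly in the spirit of the preceding theorems. Both claims are purely formal consequences of expanding the quaternion definitions componentwise; the only genuine input is the known scalar list (\ref{e6}), (\ref{e7}), (\ref{e8}).

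For the first identity, I would start from $jQ_{n}^{(3)}=j_{n}^{(3)}+ij_{n+1}^{(3)}+jj_{n+2}^{(3)}+kj_{n+3}^{(3)}$ and $JQ_{n}^{(3)}=J_{n}^{(3)}+iJ_{n+1}^{(3)}+jJ_{n+2}^{(3)}+kJ_{n+3}^{(3)}$, and write
\begin{align*}
jQ_{n}^{(3)}-4JQ_{n}^{(3)}&=(j_{n}^{(3)}-4J_{n}^{(3)})+i(j_{n+1}^{(3)}-4J_{n+1}^{(3)})\\
&\ \ +j(j_{n+2}^{(3)}-4J_{n+2}^{(3)})+k(j_{n+3}^{(3)}-4J_{n+3}^{(3)}).
\end{align*}
Now I apply (\ref{e6}) to each of the four coefficients. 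The point is that as $n$ runs through a residue class mod $3$, the indices $n,n+1,n+2,n+3$ cycle through all three classes (with $n$ and $n+3$ sharing a class), so each of the three cases of the theorem is obtained by reading off the three values $2,-3,1$ of (\ref{e6}) in the appropriate cyclic order. For instance, if $\mymod{n}{0}{3}$ the coefficients are $2,-3,1,2$, giving $2-3i+j+2k$; the other two cases follow by the same bookkeeping. This is the one spot needing a little care — making sure the cyclic shift of residues is matched correctly to the three listed outputs — but it is entirely mechanical.

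For the second identity I would expand
\begin{align*}
jQ_{n+1}^{(3)}+jQ_{n}^{(3)}&=(j_{n+1}^{(3)}+j_{n}^{(3)})+i(j_{n+2}^{(3)}+j_{n+1}^{(3)})\\
&\ \ +j(j_{n+3}^{(3)}+j_{n+2}^{(3)})+k(j_{n+4}^{(3)}+j_{n+3}^{(3)}),
\end{align*}
and then apply (\ref{e7}), i.e. $j_{m+1}^{(3)}+j_{m}^{(3)}=3J_{m+2}^{(3)}$, to each coefficient with $m=n,n+1,n+2,n+3$ respectively. This turns the right-hand side into $3J_{n+2}^{(3)}+i\,3J_{n+3}^{(3)}+j\,3J_{n+4}^{(3)}+k\,3J_{n+5}^{(3)}=3JQ_{n+2}^{(3)}$, which is the claim. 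Identity (\ref{e8}) is not actually needed for either statement as written, though it would be the analogous tool if one also wanted the companion relation $jQ_{n}^{(3)}-JQ_{n+2}^{(3)}$; I would simply note that the argument is identical. There is no real obstacle here — the whole proof is a componentwise substitution — so the only thing to double-check is the residue-class bookkeeping in the first part and the index shift $n+3\mapsto n+5$ in the second.
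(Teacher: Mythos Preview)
Your proposal is correct and is exactly the approach the paper indicates: it does not write out a proof of Theorem~\ref{t5} but simply remarks that it follows ``in a similar way'' from the scalar identities (\ref{e6}), (\ref{e7}) and (\ref{e8}), which is precisely the componentwise substitution you carry out. Your observation that (\ref{e8}) is in fact only needed for the companion Theorem~\ref{t6} is also correct.
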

\begin{theorem}\label{t6}
Let $n\geq 0$ integer. Then,
\begin{equation}
jQ_{n}^{(3)}-JQ_{n+2}^{(3)}=\left\{ 
\begin{array}{ccc}
1-i+k & \textrm{if} & \mymod{n}{0}{3} \\ 
-1+j-k& \textrm{if} & \mymod{n}{1}{3} \\ 
i-j& \textrm{if} & \mymod{n}{2}{3}%
\end{array}%
\right. .
\end{equation}
\end{theorem}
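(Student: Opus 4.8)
The plan is to reduce everything to the scalar identity (\ref{e8}) applied four times. First I would substitute the definitions
\[
jQ_{n}^{(3)}=j_{n}^{(3)}+ij_{n+1}^{(3)}+jj_{n+2}^{(3)}+kj_{n+3}^{(3)},\qquad
JQ_{n+2}^{(3)}=J_{n+2}^{(3)}+iJ_{n+3}^{(3)}+jJ_{n+4}^{(3)}+kJ_{n+5}^{(3)},
\]
subtract, and collect by the units $1,i,j,k$, obtaining
\[
jQ_{n}^{(3)}-JQ_{n+2}^{(3)}=\bigl(j_{n}^{(3)}-J_{n+2}^{(3)}\bigr)+i\bigl(j_{n+1}^{(3)}-J_{n+3}^{(3)}\bigr)+j\bigl(j_{n+2}^{(3)}-J_{n+4}^{(3)}\bigr)+k\bigl(j_{n+3}^{(3)}-J_{n+5}^{(3)}\bigr).
\]
Each coefficient is of the form $j_{m}^{(3)}-J_{m+2}^{(3)}$ with $m\in\{n,n+1,n+2,n+3\}$, which is exactly the left-hand side of (\ref{e8}); since $n\geq 0$ all four indices lie in the range where (\ref{e8}) applies.

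The second step is a case split on $n\bmod 3$. When $n\equiv 0\pmod 3$ the indices $n,n+1,n+2,n+3$ reduce to residues $0,1,2,0$, so (\ref{e8}) turns the four coefficients into $1,-1,0,1$, giving $1-i+k$. When $n\equiv 1\pmod 3$ the residues are $1,2,0,1$, hence the coefficients are $-1,0,1,-1$, giving $-1+j-k$. When $n\equiv 2\pmod 3$ the residues are $2,0,1,2$, hence the coefficients are $0,1,-1,0$, giving $i-j$. These are precisely the three values listed in the statement, so the proof is complete. This is the same mechanism the excerpt already flags for Theorem \ref{t5}.

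I do not anticipate a genuine obstacle: once the componentwise decomposition is written down, everything is pinned by (\ref{e8}) and the three residue classes. The only point needing a little care is the bookkeeping of which residue class each of the four shifted indices falls into for each residue of $n$ — effectively a $3\times 4$ table — and checking that the $i$- and $j$-components vanish in exactly the right cases so the answers collapse to quaternions with only two nonzero coordinates. If one wished to avoid the case distinction, one could instead feed the Binet formulas (\ref{b1}) and (\ref{b2}) directly into the difference, express it in terms of $2^{n}$ and $a\omega_{1}^{n}+b\omega_{2}^{n}$, and use the $3$-periodicity of the latter; but the route through (\ref{e8}) is shorter and is the one I would take.
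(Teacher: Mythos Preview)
Your proposal is correct and follows exactly the route the paper indicates: the paper does not spell out a proof but simply remarks that Theorem~\ref{t6} is proved ``in a similar way'' using (\ref{e8}), which is precisely your componentwise reduction followed by the case split on $n\bmod 3$. The bookkeeping you give for the residues and resulting quaternions is accurate.
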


The following is a result for the sum of third order Jacobsthal-Lucas Quaternions.
\begin{theorem}\label{t7}
Let $n\geq 0$ integer. Then,
\begin{equation}\label{h2}
\sum_{s=0}^{n}jQ_{s}^{(3)}=
\left\{ 
\begin{array}{ccc}
jQ_{n+1}^{(3)}+(1-4i-5j-7k) & \textrm{if} & \mymod{n}{0}{3} \\ 
jQ_{n+1}^{(3)}-2(1+2i+j+5k) & \textrm{if} & \mymod{n}{1}{3} \\ 
jQ_{n+1}^{(3)}-(2+i+5j+10k) & \textrm{if} & \mymod{n}{2}{3}%
\end{array}%
\right. .
\end{equation}
\end{theorem}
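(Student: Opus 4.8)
The plan is to reduce the quaternion identity to the known scalar summation formula \eqref{e11} by expanding componentwise. Writing $\sum_{s=0}^{n}jQ_{s}^{(3)} = \left(\sum_{s=0}^{n}j_{s}^{(3)}\right) + i\left(\sum_{s=0}^{n}j_{s+1}^{(3)}\right) + j\left(\sum_{s=0}^{n}j_{s+2}^{(3)}\right) + k\left(\sum_{s=0}^{n}j_{s+3}^{(3)}\right)$, each of the four coefficients is a sum of consecutive third order Jacobsthal-Lucas numbers, which can be rewritten using \eqref{e11} after shifting the index. Specifically, $\sum_{s=0}^{n}j_{s+t}^{(3)} = \sum_{k=0}^{n+t}j_{k}^{(3)} - \sum_{k=0}^{t-1}j_{k}^{(3)}$, so each coefficient becomes $j_{n+t+1}^{(3)}$ plus an explicit correction term that depends only on $n+t \bmod 3$ (via \eqref{e11}) and on the fixed partial sums $\sum_{k=0}^{t-1}j_{k}^{(3)}$ for $t=0,1,2,3$, which are $0,\,2,\,3,\,8$ respectively using the initial data $j_0^{(3)}=2,\ j_1^{(3)}=1,\ j_2^{(3)}=5,\ j_3^{(3)}=10$.

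The key computational step is then to assemble the four expressions $j_{n+1}^{(3)} + c_0,\ j_{n+2}^{(3)} + c_1,\ j_{n+3}^{(3)} + c_2,\ j_{n+4}^{(3)} + c_3$ into a quaternion. The leading part $j_{n+1}^{(3)} + i j_{n+2}^{(3)} + j j_{n+3}^{(3)} + k j_{n+4}^{(3)}$ is exactly $jQ_{n+1}^{(3)}$ by definition, and the remaining part is the constant quaternion $c_0 + i c_1 + j c_2 + k c_3$. So I would organize the proof as: (i) state the four-component expansion; (ii) apply \eqref{e11} to each component, being careful that the residue class of the upper summation index is $n\bmod 3$, $n+1 \bmod 3$, $n+2\bmod 3$, $n+3 \equiv n \bmod 3$ respectively; (iii) collect the constant terms in each of the three cases $n\equiv 0,1,2 \pmod 3$ and verify they match $1-4i-5j-7k$, $-2(1+2i+j+5k)$, and $-(2+i+5j+10k)$.

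The main obstacle — really just bookkeeping rather than a genuine difficulty — is tracking the case split correctly. Since \eqref{e11} distinguishes only $n\equiv 0$ from $n\not\equiv 0$, but the four components have upper indices in three different residue classes simultaneously, one must tabulate, for each of $n\equiv 0,1,2\pmod 3$, which components get the ``$+1$'' correction (when the relevant index is $\equiv 0$) and which get the ``$-2$'' correction. For instance, when $n\equiv 0\pmod 3$: the real component has index $n$ (class $0$, correction $+1$) minus $0$; the $i$-component has index $n+1$ (class $1$, correction $-2$) minus $2$; the $j$-component has index $n+2$ (class $2$, correction $-2$) minus $3$; the $k$-component has index $n+3$ (class $0$, correction $+1$) minus $8$. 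This gives constants $1,\,-2-2,\,-2-3,\,1-8 = 1,\,-4,\,-5,\,-7$, matching the first case. The analogous tabulation for $n\equiv 1$ and $n\equiv 2$ completes the argument, and I would present it compactly rather than writing out all the arithmetic.
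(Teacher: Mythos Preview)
Your proposal is correct and follows essentially the same approach as the paper: both expand the quaternion sum componentwise, rewrite each shifted component sum as $\sum_{k=0}^{n+t}j_{k}^{(3)}-\sum_{k=0}^{t-1}j_{k}^{(3)}$, apply the scalar formula \eqref{e11} according to the residue class of the upper index, and then collect the resulting constants into a quaternion alongside $jQ_{n+1}^{(3)}$. The paper writes out only the case $\mymod{n}{0}{3}$ in full (obtaining the same constants $1,-4,-5,-7$ you computed) and sketches $\mymod{n}{1}{3}$, so your tabulation of all three cases is in fact slightly more explicit.
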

\begin{proof}
Using (\ref{e11}), we obtain $$\sum\limits_{k=0}^{n}j_{k}^{(3)}=\left\{ 
\begin{array}{ccc}
j_{n+1}^{(3)}-2 & \textrm{if} & \mynotmod{n}{0}{3} \\ 
j_{n+1}^{(3)}+1 & \textrm{if} & \mymod{n}{0}{3}%
\end{array}%
\right. .$$
Furthermore, if $\mymod{n}{0}{3}$, we can write
\begin{align*}
&\sum_{s=0}^{n}jQ_{s}^{(3)}\\
&\hspace*{3ex}=\sum_{s=0}^{n}j_{s}^{(3)} + i \sum_{s=0}^{n}j_{s+1}^{(3)}
+ j \sum_{s=0}^{n}j_{s+2}^{(3)}
+ k \sum_{s=0}^{n}j_{s+3}^{(3)}\\
&\hspace*{3ex}= \left(j_{n+1}^{(3)} +1 \right) +
i \left(\sum_{k=0}^{n+1}j_{k}^{(3)}-2\right) +
j \left(\sum_{k=0}^{n+2}j_{k}^{(3)}-3\right) +
k \left(\sum_{k=0}^{n+3}j_{k}^{(3)}-8\right)\\
&\hspace*{3ex}=\left(j_{n+1}^{(3)} +1\right) +
i\left(j_{n+2}^{(3)} - 4 \right) +
j\left(j_{n+3}^{(3)} - 5 \right) +
k\left(j_{n+4}^{(3)} - 7 \right).
\end{align*}
If $\mymod{n}{1}{3}$, we have $\sum_{k=0}^{n}j_{k}^{(3)}=j_{n+1}^{(3)}-2$ and $\sum_{k=0}^{n+2}j_{k}^{(3)}=j_{n+3}^{(3)}+1$, then $$\sum_{s=0}^{n}jQ_{s}^{(3)}=jQ_{n+1}^{(3)}-2(1+2i+j+5k).$$
The proof is similar to case $\mymod{n}{2}{3}$. Thus, the proof is completed.
\end{proof}

\section{Generating Function for Third Order Jacobsthal Quaternions}
Let $JQ_{n}^{(3)}=J_{n}^{(3)}+iJ_{n+1}^{(3)}+jJ_{n+2}^{(3)}+kJ_{n+3}^{(3)}$ be the $n$-th third order Jacobsthal quaternion. The function $G(t)=\sum_{n=0}^{\infty}JQ_{n}^{(3)}t^{n}$ is called the generating function for the sequence $\{JQ_{n}^{(3)}\}$. In \cite{Hal1}, the author found a generating function for Fibonacci quaternions. In the following theorem, we established the generating function for third order Jacobsthal quaternions.

\begin{theorem}
The generating function for the third order Jacobsthal quaternion $\{JQ_{n}^{(3)}\}_{n\geq0}$ is
\begin{equation}
\sum_{n=0}^{\infty}JQ_{n}^{(3)}t^{n}=\frac{(i+j+2k)+t(1+j+3k)+2t^{2}(j+k)}{1-t-t^{2}-2t^{3}}.
\end{equation}
\end{theorem}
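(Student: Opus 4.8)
The plan is to compute the generating function directly from the recurrence \eqref{c1} satisfied by the third order Jacobsthal numbers, lifted to quaternions. First I would observe that the quaternion sequence $\{JQ_n^{(3)}\}$ inherits the same linear recurrence: since each component $J_{n+r}^{(3)}$ satisfies $J_{n+3}^{(3)}=J_{n+2}^{(3)}+J_{n+1}^{(3)}+2J_n^{(3)}$, adding the four shifted copies with coefficients $1,i,j,k$ gives $JQ_{n+3}^{(3)}=JQ_{n+2}^{(3)}+JQ_{n+1}^{(3)}+2JQ_n^{(3)}$ for all $n\geq 0$. This is the only structural fact needed; the denominator $1-t-t^2-2t^3$ is then forced by the characteristic polynomial $x^3-x^2-x-2$.

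Next I would set $G(t)=\sum_{n\geq 0}JQ_n^{(3)}t^n$ and form the combination $(1-t-t^2-2t^3)G(t)=\sum_{n\geq0}JQ_n^{(3)}t^n-\sum_{n\geq0}JQ_n^{(3)}t^{n+1}-\sum_{n\geq0}JQ_n^{(3)}t^{n+2}-2\sum_{n\geq0}JQ_n^{(3)}t^{n+3}$. For $n\geq 3$ the coefficient of $t^n$ is $JQ_n^{(3)}-JQ_{n-1}^{(3)}-JQ_{n-2}^{(3)}-2JQ_{n-3}^{(3)}=0$ by the recurrence above, so only the terms with $n=0,1,2$ survive. Collecting these gives
\[
(1-t-t^2-2t^3)G(t)=JQ_0^{(3)}+\bigl(JQ_1^{(3)}-JQ_0^{(3)}\bigr)t+\bigl(JQ_2^{(3)}-JQ_1^{(3)}-JQ_0^{(3)}\bigr)t^2.
\]
Then I would plug in the initial quaternions. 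From the table, $J_0^{(3)}=0$, $J_1^{(3)}=J_2^{(3)}=1$, $J_3^{(3)}=2$, $J_4^{(3)}=5$, so $JQ_0^{(3)}=0+i+j+2k=i+j+2k$, $JQ_1^{(3)}=1+i+2j+5k$, and $JQ_2^{(3)}=1+2i+5j+9k$. Computing $JQ_1^{(3)}-JQ_0^{(3)}=1+0i+j+3k=1+j+3k$ and $JQ_2^{(3)}-JQ_1^{(3)}-JQ_0^{(3)}=0+0i+2j+2k=2(j+k)$ reproduces exactly the numerator in the statement, and dividing by $1-t-t^2-2t^3$ finishes the proof.

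The only mild obstacle is bookkeeping: one must be careful that the recurrence for $JQ_n^{(3)}$ genuinely holds from $n=0$ onward (it does, since \eqref{c1} is valid for $n\geq 0$ and each shifted component sequence $\{J_{n+r}^{(3)}\}$ still satisfies the recurrence for $n\geq 0$), and that the low-order coefficients $JQ_0^{(3)},JQ_1^{(3)},JQ_2^{(3)}$ are read off correctly from $J_0^{(3)},\dots,J_4^{(3)}$. There is no convergence subtlety to worry about since the identity is purely formal in $t$ (or one may note $|t|<1/2$ suffices for the geometric manipulations, as $2^n$ dominates the growth of $J_n^{(3)}$ via the Binet formula \eqref{b1}). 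No deeper input — not even Lemma~\ref{le1} — is required beyond the defining recurrence and the table of initial values.
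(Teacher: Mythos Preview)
Your proof is correct and follows essentially the same approach as the paper: form $(1-t-t^{2}-2t^{3})G(t)$, use the inherited recurrence to kill all coefficients of $t^{n}$ for $n\geq 3$, and then evaluate the three surviving initial terms. If anything, you are more explicit than the paper in justifying why the quaternion sequence satisfies the recurrence and in computing $JQ_{0}^{(3)},JQ_{1}^{(3)},JQ_{2}^{(3)}$ from the table.
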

\begin{proof}
Assuming that the generating function of the quaternion $\{JQ_{n}^{(3)}\}$ has the form $G(t)=\sum_{n=0}^{\infty}JQ_{n}^{(3)}t^{n}$, we obtain that
\begin{align*}
&(1-t-t^2-2t^3)G(t)\\
&\hspace*{5ex}=(JQ_{0}^{(3)}+JQ_{1}^{(3)}t+ \cdots ) -
(JQ_{0}^{(3)}t+JQ_{1}^{(3)}t^2+ \cdots ) - \cdots \\
&\hspace*{5ex}=JQ_{0}^{(3)}+t(JQ_{1}^{(3)}-JQ_{0}^{(3)})
+t^2(JQ_{2}^{(3)}-JQ_{1}^{(3)}-JQ_{0}^{(3)}),
\end{align*}
since $JQ_{n}^{(3)}=JQ_{n-1}^{(3)}+JQ_{n-2}^{(3)}+2JQ_{n-3}^{(3)}$, $n\geq 3$ and the coefficients of $t^{n}$ for $n\geq 3$ are equal to zero. In equivalent form is
\begin{align*}
\sum_{n=0}^{\infty}JQ_{n}^{(3)}t^{n}&=\frac{JQ_{0}^{(3)}+t(JQ_{1}^{(3)}-JQ_{0}^{(3)})+t^{2}(JQ_{2}^{(3)}-JQ_{1}^{(3)}-JQ_{0}^{(3)})}{1-t-t^{2}-2t^{3}}\\
&=\frac{(i+j+2k)+t(1+j+3k)+2t^{2}(j+k)}{1-t-t^{2}-2t^{3}}.
\end{align*}
\end{proof}

Thus, the Binet formula for $JQ_{n}^{(3)}$ can be given in the following theorem.
\begin{theorem}
If $JQ_{n}^{(3)}=J_{n}^{(3)}+iJ_{n+1}^{(3)}+jJ_{n+2}^{(3)}+kJ_{n+3}^{(3)}$ be the $n$-th third order Jacobsthal quaternion, then
\begin{equation}
JQ_{n}^{(3)}=\frac{1}{7}\left[2^{n+1}\alpha-\left(1+\frac{2i\sqrt{3}}{3}\right)\omega_{1}^{n}\beta-\left(1-\frac{2i\sqrt{3}}{3}\right)\omega_{2}^{n}\gamma\right],
\end{equation}
where $\omega_{1},\ \omega_{2}$ are the solutions of the equation $t^{2}+t+1=0$, and $$\alpha=1+2i+4j+8k,$$ $$\beta=1+\omega_{1}i+\omega_{1}^{2}j+k$$ and $$\gamma=1+\omega_{2}i+\omega_{2}^{2}j+k.$$
\end{theorem}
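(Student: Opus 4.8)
The plan is to derive the Binet formula for $JQ_n^{(3)}$ directly from the scalar Binet formula \eqref{b1}, exploiting the linearity of the quaternion construction $JQ_n^{(3)} = J_n^{(3)} + iJ_{n+1}^{(3)} + jJ_{n+2}^{(3)} + kJ_{n+3}^{(3)}$ in the sequence $J_n^{(3)}$. First I would substitute \eqref{b1} into each of the four components, writing $J_{n+r}^{(3)} = \frac{1}{7}\bigl(2^{n+r+1} - a\omega_1^{n+r} - b\omega_2^{n+r}\bigr)$ for $r=0,1,2,3$, where $a = 1+\frac{2i\sqrt3}{3}$ and $b = 1-\frac{2i\sqrt3}{3}$ (as in the paper's earlier computations; note this $i\sqrt3$ is the complex unit inside the coefficients, not the quaternion unit $i$).

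Then I would collect the three geometric pieces. The $2^n$ part yields $\frac{1}{7}\cdot 2^{n+1}\bigl(1 + 2i + 4j + 8k\bigr)$ since the factors $2^r$ for $r=0,1,2,3$ are $1,2,4,8$ attached to the quaternion units $1,i,j,k$ respectively; this is exactly $\frac{1}{7}2^{n+1}\alpha$. The $\omega_1^n$ part yields $-\frac{a}{7}\omega_1^n\bigl(1 + \omega_1 i + \omega_1^2 j + \omega_1^3 k\bigr)$, and since $\omega_1^3 = 1$ (the $\omega_i$ are the complex cube roots of unity, being roots of $t^2+t+1=0$), the bracket collapses to $1 + \omega_1 i + \omega_1^2 j + k = \beta$. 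Similarly the $\omega_2^n$ part gives $-\frac{b}{7}\omega_2^n\gamma$. Summing the three contributions produces precisely the claimed formula.

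The only genuinely delicate point is bookkeeping the two distinct roles of the symbol $i$: inside the scalar coefficients $a,b$ it is the ordinary imaginary unit of $\mathbb{C}$, while as a basis element of the quaternion algebra it multiplies $J_{n+1}^{(3)}$. Since the scalar coefficients commute with the quaternion units, there is no ordering ambiguity, but I would state this explicitly to avoid confusion. The identity $\omega_j^3 = 1$ is what makes the $k$-coefficient in $\beta$ and $\gamma$ equal to $1$ rather than $\omega_j^3$, so I would flag that simplification. Everything else is routine algebra, so no serious obstacle remains; alternatively, one could verify the formula by checking it satisfies the recurrence $JQ_n^{(3)} = JQ_{n-1}^{(3)} + JQ_{n-2}^{(3)} + 2JQ_{n-3}^{(3)}$ together with three initial values, but the direct substitution is cleaner and I would present that.
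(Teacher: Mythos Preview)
Your proposal is correct and follows essentially the same route as the paper: substitute the scalar Binet formula \eqref{b1} into each component of $JQ_{n}^{(3)}$, factor out $2^{n+1}$, $\omega_{1}^{n}$, $\omega_{2}^{n}$, and use $\omega_{1}^{3}=\omega_{2}^{3}=1$ to obtain $\alpha,\beta,\gamma$. Your explicit remark about the two roles of the symbol $i$ is a helpful addition that the paper leaves implicit.
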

\begin{proof}
Let $a=1+\frac{2i\sqrt{3}}{3}$ and $b=1-\frac{2i\sqrt{3}}{3}$. Using the relation (\ref{b1}), we have
\begin{align*}
7\cdot JQ_{n}^{(3)}&=7 \left(J_{n}^{(3)}+iJ_{n+1}^{(3)}+jJ_{n+2}^{(3)}+kJ_{n+3}^{(3)}\right)\\
&=\left(2^{n+1}-(a\omega_{1}^{n}+b\omega_{2}^{n})\right)+\left(2^{n+2}-(a\omega_{1}^{n+1}+b\omega_{2}^{n+1})\right)i\\
&\ +\left(2^{n+3}-(a\omega_{1}^{n+2}+b\omega_{2}^{n+2})\right)j+\left(2^{n+4}-(a\omega_{1}^{n+3}+b\omega_{2}^{n+3})\right)k,\\
&=\ 2^{n+1}(1+2i+4j+8k)-a\omega_{1}^{n}(1+\omega_{1}i+\omega_{1}^{2}j+\omega_{1}^{3}k)\\
&\ -b\omega_{2}^{n}(1+\omega_{2}i+\omega_{2}^{2}j+\omega_{2}^{3}k),
\end{align*}
since $\omega_{1}^{3}=\omega_{2}^{3}=1$. So, the theorem is proved.
\end{proof}

\begin{theorem}
If $jQ_{n}^{(3)}=j_{n}^{(3)}+ij_{n+1}^{(3)}+jj_{n+2}^{(3)}+kj_{n+3}^{(3)}$ be the $n$-th third order Jacobsthal-Lucas quaternion, then we have
\begin{equation}
jQ_{n}^{(3)}=\frac{1}{7}\left[2^{n+3}\alpha+(3+2i\sqrt{3})\omega_{1}^{n}\beta+(3-2i\sqrt{3})\omega_{2}^{n}\gamma\right],
\end{equation}
where $\omega_{1},\ \omega_{2}$ are the solutions of the equation $t^{2}+t+1=0$; $\alpha,\ \beta$ and $\gamma$ as before.
\end{theorem}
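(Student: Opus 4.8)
The plan is to repeat, almost word for word, the argument given above for the Binet formula of $JQ_{n}^{(3)}$, but now feeding in the Binet formula (\ref{b2}) for $j_{n}^{(3)}$ in place of (\ref{b1}). Set $a=3+2i\sqrt{3}$ and $b=3-2i\sqrt{3}$, so that (\ref{b2}) reads $7j_{m}^{(3)}=8\cdot 2^{m}+a\omega_{1}^{m}+b\omega_{2}^{m}$ for every $m\geq 0$. First I would expand
\begin{align*}
7\cdot jQ_{n}^{(3)}&=7\left(j_{n}^{(3)}+ij_{n+1}^{(3)}+jj_{n+2}^{(3)}+kj_{n+3}^{(3)}\right)\\
&=\left(8\cdot 2^{n}+a\omega_{1}^{n}+b\omega_{2}^{n}\right)+i\left(8\cdot 2^{n+1}+a\omega_{1}^{n+1}+b\omega_{2}^{n+1}\right)\\
&\quad+j\left(8\cdot 2^{n+2}+a\omega_{1}^{n+2}+b\omega_{2}^{n+2}\right)+k\left(8\cdot 2^{n+3}+a\omega_{1}^{n+3}+b\omega_{2}^{n+3}\right).
\end{align*}

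Next I would collect the three geometric progressions separately: the powers-of-two part sums to $8\cdot 2^{n}(1+2i+4j+8k)=2^{n+3}\alpha$, the $\omega_{1}^{n}$ part to $a\omega_{1}^{n}(1+\omega_{1}i+\omega_{1}^{2}j+\omega_{1}^{3}k)$, and the $\omega_{2}^{n}$ part to the conjugate expression $b\omega_{2}^{n}(1+\omega_{2}i+\omega_{2}^{2}j+\omega_{2}^{3}k)$. The last step is to invoke $\omega_{1}^{3}=\omega_{2}^{3}=1$, valid because $\omega_{1},\omega_{2}$ are the primitive cube roots of unity; this turns the two trailing factors into exactly $\beta=1+\omega_{1}i+\omega_{1}^{2}j+k$ and $\gamma=1+\omega_{2}i+\omega_{2}^{2}j+k$. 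Dividing by $7$ then yields the asserted identity.

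There is no genuine obstacle here; the only thing that needs care is the bookkeeping of constants — the coefficient $8$ of $2^{m}$ and, above all, the $+$ signs multiplying the $\omega_{1}^{m}$ and $\omega_{2}^{m}$ terms in (\ref{b2}), in contrast to the $-$ signs appearing in (\ref{b1}) for $J_{n}^{(3)}$. As an alternative that avoids substituting the Binet formula at all, one can combine the already proven Binet formula for $JQ_{n}^{(3)}$ with the identity $3JQ_{n}^{(3)}+jQ_{n}^{(3)}=2^{n+1}\alpha$ established earlier in this section: this gives $jQ_{n}^{(3)}=2^{n+1}\alpha-3JQ_{n}^{(3)}$, and since $3\bigl(1+\tfrac{2i\sqrt{3}}{3}\bigr)=3+2i\sqrt{3}$, $3\bigl(1-\tfrac{2i\sqrt{3}}{3}\bigr)=3-2i\sqrt{3}$ and $2^{n+1}-\tfrac{3}{7}\cdot 2^{n+1}=\tfrac{1}{7}\cdot 2^{n+3}$, the stated expression drops out after elementary simplification. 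I expect to present the direct substitution as the main proof, since it is the most transparent and parallels the preceding theorem exactly.
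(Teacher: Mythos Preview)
Your proposal is correct and is exactly the argument the paper has in mind: the paper omits the proof of this theorem entirely, leaving it as the obvious analogue of the preceding Binet formula for $JQ_{n}^{(3)}$, and your direct substitution of (\ref{b2}) into the quaternion components carries that analogy out verbatim. Your alternative derivation via $jQ_{n}^{(3)}=2^{n+1}\alpha-3JQ_{n}^{(3)}$ is a neat shortcut not mentioned in the paper, but the main line is the intended one.
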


The following theorem gives the multiplication of $JQ_{n}^{(3)}$ by $jQ_{n}^{(3)}$. 
\begin{theorem}
Let $n\geq 1$ integer such that $\mymod{n}{0}{3}$. Then,
\begin{align*}
49\cdot \left(JQ_{n}^{(3)}\right)\cdot \left(jQ_{n}^{(3)}\right)&=(24\cdot 2^{n}-1328\cdot 2^{2n}+30)\\
&\ +i(64\cdot 2^{2n}-2\cdot 2^{n}+36)\\
&\ +j(2^{2n+7}-205\cdot 2^{n+1}-12)\\
&\ +k(5\cdot 2^{n+5}+2^{2n+8}-24).
\end{align*}
\end{theorem}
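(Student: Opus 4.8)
The plan is to convert the product of two ``moving'' quaternions into the product of two \emph{fixed} ones. Set $\alpha=1+2i+4j+8k$ and $v=2-3i+j+2k$. For $n\equiv 0\pmod 3$ the first identity of Theorem \ref{t5} gives $jQ_n^{(3)}-4JQ_n^{(3)}=v$, while equation (\ref{t2}) gives $3JQ_n^{(3)}+jQ_n^{(3)}=2^{n+1}\alpha$. Subtracting the two relations eliminates $jQ_n^{(3)}$ and leaves $7\,JQ_n^{(3)}=2^{n+1}\alpha-v$; substituting this back gives $7\,jQ_n^{(3)}=2^{n+3}\alpha+3v$. (Equivalently one can read these two formulas straight off the Binet expressions, using $\omega_1^{\,n}=\omega_2^{\,n}=1$ when $n\equiv 0\pmod 3$.)

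Multiplying the two relations, I would then expand
$$49\,JQ_n^{(3)}jQ_n^{(3)}=\bigl(2^{n+1}\alpha-v\bigr)\bigl(2^{n+3}\alpha+3v\bigr)=2^{2n+4}\,\alpha^{2}+3\cdot 2^{n+1}\,\alpha v-2^{n+3}\,v\alpha-3\,v^{2},$$
keeping the order of the factors because quaternion multiplication is noncommutative. Only four fixed quaternion products now remain, to be evaluated from the rules (\ref{f3}): the two squares $\alpha^{2}=-83+4i+8j+16k$ and $v^{2}=-10-12i+4j+8k$ (a one-line computation each, using $u^{2}=-(q_1^{2}+q_2^{2}+q_3^{2})$ for a pure quaternion $u=iq_1+jq_2+kq_3$), and the two mixed products $\alpha v$ and $v\alpha$, which have the same scalar part and differ only by twice the cross product of their vector parts. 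Substituting these four products into the expansion above and collecting, in each component $1,i,j,k$ separately, the coefficient of $2^{2n}$, the coefficient of $2^{n}$, and the constant term, one recovers exactly the four displayed lines.

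The argument has no conceptual content; the only point requiring care is the noncommutativity, namely that $\alpha v\neq v\alpha$ and that they occur with the distinct scalar weights $3\cdot 2^{n+1}$ and $2^{n+3}$, so their contributions do not simply add. A fully self-contained variant, bypassing Theorem \ref{t5} and equation (\ref{t2}), is to expand $JQ_n^{(3)}jQ_n^{(3)}$ componentwise via (\ref{f3}) into a signed sum of the sixteen scalar products $J_{n+a}^{(3)}j_{n+b}^{(3)}$ with $0\le a,b\le 3$, and to evaluate each of these from the Binet formulas (\ref{b1}) and (\ref{b2}) with $\omega_1^{\,n}=\omega_2^{\,n}=1$; this needs more algebra but relies only on the Binet formulas and no earlier theorems.
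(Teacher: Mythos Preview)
Your proposal is correct. The decomposition $7\,JQ_n^{(3)}=2^{n+1}\alpha-v$ and $7\,jQ_n^{(3)}=2^{n+3}\alpha+3v$ follows exactly as you say from (\ref{t2}) and Theorem~\ref{t5}, and the four fixed products $\alpha^{2}$, $v^{2}$, $\alpha v$, $v\alpha$ combine to the stated formula (I checked all four components against the displayed lines).

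Your main argument is genuinely different from the paper's. The paper does precisely what you sketch as the ``fully self-contained variant'': it expands $JQ_n^{(3)}\cdot jQ_n^{(3)}$ componentwise into the signed sums of products $J_{n+a}^{(3)}j_{n+b}^{(3)}$, then plugs in the closed forms $J_{n}^{(3)}=\tfrac{1}{7}(2^{n+1}-2)$, $J_{n+1}^{(3)}=\tfrac{1}{7}(2^{n+2}+3)$, \ldots, $j_{n+2}^{(3)}=\tfrac{1}{7}(2^{n+5}+3)$ valid when $n\equiv 0\pmod 3$, and simplifies each component separately. Your route is cleaner: by first packaging the two quaternions as fixed linear combinations of $\alpha$ and $v$ you reduce sixteen scalar products to four quaternion products, and the dependence on $n$ is isolated in the scalar weights $2^{2n+4}$, $3\cdot 2^{n+1}$, $2^{n+3}$ from the outset. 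The paper's approach, on the other hand, is self-contained in the sense that it uses only the Binet formulas and not the earlier quaternion identities (\ref{t2}) and Theorem~\ref{t5}.
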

\begin{proof}
If $\mymod{n}{0}{3}$, we can see $J_{n}^{(3)}=\frac{1}{7}(2^{n+1}-2)$ and $j_{n}^{(3)}=\frac{1}{7}(2^{n+3}+6)$. Then, multiplying $J^{(3)}Q_{n}$ by $j^{(3)}Q_{n}$, we have
\begin{align*}
JQ_{n}^{(3)}\cdot jQ_{n}^{(3)}&=J_{n}^{(3)}j_{n}^{(3)}-J_{n+1}^{(3)}j_{n+1}^{(3)}-J_{n+2}^{(3)}j_{n+2}^{(3)}-J_{n+3}^{(3)}j_{n+3}^{(3)}\\
&\ +i(J_{n}^{(3)}j_{n+1}^{(3)}+J_{n+1}^{(3)}j_{n}^{(3)}+J_{n+2}^{(3)}j_{n+3}^{(3)}-J_{n+3}^{(3)}j_{n+2}^{(3)})\\
&\ +j(J_{n}^{(3)}j_{n+2}^{(3)}+J_{n+2}^{(3)}j_{n}^{(3)}-J_{n+1}^{(3)}j_{n+3}^{(3)}-J_{n+3}^{(3)}j_{n+1}^{(3)})\\
&\ +k(J_{n}^{(3)}j_{n+3}^{(3)}+J_{n+3}^{(3)}j_{n}^{(3)}+J_{n+1}^{(3)}j_{n+2}^{(3)}-J_{n+2}^{(3)}j_{n+1}^{(3)}),
\end{align*}
using the equation (\ref{g1}), we have $J_{n+1}^{(3)}=\frac{1}{7}(2^{n+2}+3)$ and $J_{n+2}^{(3)}=\frac{1}{7}(2^{n+3}-1)$. Furthermore, we have $j_{n+1}^{(3)}=\frac{1}{7}(2^{n+4}-9)$ and  $j_{n+2}^{(3)}=\frac{1}{7}(2^{n+5}+3)$. Thus, we get
$$J_{n}^{(3)}j_{n}^{(3)}-J_{n+1}^{(3)}j_{n+1}^{(3)}-J_{n+2}^{(3)}j_{n+2}^{(3)}-J_{n+3}^{(3)}j_{n+3}^{(3)}=\frac{1}{49}\left(24\cdot 2^{n}-1328\cdot 2^{2n}+30\right),$$
$$J_{n}^{(3)}j_{n+1}^{(3)}+J_{n+1}^{(3)}j_{n}^{(3)}+J_{n+2}^{(3)}j_{n+3}^{(3)}-J_{n+3}^{(3)}j_{n+2}^{(3)}=\frac{1}{49}\left(64\cdot 2^{2n}-2\cdot 2^{n}+36\right),$$
$$J_{n}^{(3)}j_{n+2}^{(3)}+J_{n+2}^{(3)}j_{n}^{(3)}-J_{n+1}^{(3)}j_{n+3}^{(3)}-J_{n+3}^{(3)}j_{n+1}^{(3)}=\frac{1}{49}\left(2^{2n+7}-205\cdot 2^{n+1}-12\right)$$
and 
$$J_{n}^{(3)}j_{n+3}^{(3)}+J_{n+3}^{(3)}j_{n}^{(3)}+J_{n+1}^{(3)}j_{n+2}^{(3)}-J_{n+2}^{(3)}j_{n+1}^{(3)}=\frac{1}{49}\left(5\cdot 2^{n+5}+2^{2n+8}-24\right).$$
\end{proof}

\section{Matrix Representation of Third Order Jacobsthal Quaternions}
The matrix method is very useful method in order to obtain some identities for special sequences. For example, using matrix methods, the authors obtained some identities for various special sequences (see \cite{Cer,Kal}). In this case, the generating matrix of the sequence $\{J_{n}^{(3)}\}$ is given by 
\begin{equation}\label{g1}
M^{n}=\left[ 
\begin{array}{ccc}
1 & 1 & 2 \\ 
1 & 0 & 0 \\ 
0 & 1 & 0%
\end{array}%
\right]^{n}=\left[ 
\begin{array}{ccc}
J_{n+1}^{(3)} & J_{n}^{(3)}+2J_{n-1}^{(3)} & 2J_{n}^{(3)} \\ 
J_{n}^{(3)} & J_{n-1}^{(3)}+2J_{n-2}^{(3)} & 2J_{n-1}^{(3)} \\ 
J_{n-1}^{(3)} & J_{n-2}^{(3)}+2J_{n-3}^{(3)} & 2J_{n-2}^{(3)}
\end{array}%
\right],
\end{equation}
for all $n\geq 0$. We define for convenience $J_{-1}^{(3)}=0$, $J_{-2}^{(3)}=\frac{1}{2}$ and $J_{-3}^{(3)}=-\frac{1}{4}$.

Now, let us define the following matrix as
\begin{equation}\label{g2}
R=\left[ 
\begin{array}{ccc}
JQ_{4}^{(3)} & JQ_{3}^{(3)}+2JQ_{2}^{(3)} & 2JQ_{3}^{(3)} \\ 
JQ_{3}^{(3)} & JQ_{2}^{(3)}+2JQ_{1}^{(3)} & 2JQ_{2}^{(3)} \\ 
JQ_{2}^{(3)} & JQ_{1}^{(3)}+2JQ_{0}^{(3)} & 2JQ_{1}^{(3)}%
\end{array}%
\right].
\end{equation}
This matrix can be called as the third order Jacobsthal quaternion matrix. Then, we can give the next theorem by the third order Jacobsthal quaternion matrix.

\begin{theorem}\label{t7}
If $JQ_{n}^{(3)}$ be the $n$-th third order Jacobsthal quaternion. Then, for $n\geq0$:
\begin{equation}\label{g3}
R\cdot M^{n}=\left[ 
\begin{array}{ccc}
JQ_{n+4}^{(3)} & JQ_{n+3}^{(3)}+2JQ_{n+2}^{(3)} & 2JQ_{n+3}^{(3)} \\ 
JQ_{n+3}^{(3)} & JQ_{n+2}^{(3)}+2JQ_{n+1}^{(3)} & 2JQ_{n+2}^{(3)} \\ 
JQ_{n+2}^{(3)} & JQ_{n+1}^{(3)}+2JQ_{n}^{(3)} & 2JQ_{n+1}^{(3)}%
\end{array}%
\right].
\end{equation}
\end{theorem}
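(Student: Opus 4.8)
The plan is to prove this by induction on $n$. The base case $n=0$ is immediate: $M^{0}$ is the identity matrix, so $R\cdot M^{0}=R$, and comparing entries, the claimed right-hand side at $n=0$ is exactly the matrix $R$ as defined in \eqref{g2}. (One checks for instance that the $(1,1)$ entry $JQ_{4}^{(3)}$, the $(1,2)$ entry $JQ_{3}^{(3)}+2JQ_{2}^{(3)}$, and so on, all match the definition of $R$.)

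For the inductive step, assume $R\cdot M^{n}$ has the stated form. Then $R\cdot M^{n+1}=(R\cdot M^{n})\cdot M$, so I would multiply the claimed matrix for $R\cdot M^{n}$ on the right by $M=\left[\begin{array}{ccc}1&1&2\\1&0&0\\0&1&0\end{array}\right]$ and simplify each of the nine resulting entries. The key fact making this work is that the third order Jacobsthal quaternions inherit the recurrence of the third order Jacobsthal numbers: since $JQ_{n}^{(3)}=J_{n}^{(3)}+iJ_{n+1}^{(3)}+jJ_{n+2}^{(3)}+kJ_{n+3}^{(3)}$ is a fixed $\Z$-linear combination of shifts of $J^{(3)}$, the identity \eqref{c1} gives $JQ_{m+3}^{(3)}=JQ_{m+2}^{(3)}+JQ_{m+1}^{(3)}+2JQ_{m}^{(3)}$ for all $m\geq 0$ (this is exactly the relation invoked in the generating-function section). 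For example, the $(1,1)$ entry of the product is $JQ_{n+4}^{(3)}\cdot 1+(JQ_{n+3}^{(3)}+2JQ_{n+2}^{(3)})\cdot 1+2JQ_{n+3}^{(3)}\cdot 0$, which equals $JQ_{n+4}^{(3)}+JQ_{n+3}^{(3)}+2JQ_{n+2}^{(3)}$; but by the quaternion recurrence with $m=n+2$ this is precisely $JQ_{n+5}^{(3)}$, the desired $(1,1)$ entry for $R\cdot M^{n+1}$. The same pattern handles the other entries: columns $1$ and $3$ of the new matrix are just shifted copies of column $1$ (times $2$ where appropriate), and the middle column entries reduce to combinations of the form $JQ_{m+1}^{(3)}+2JQ_{m}^{(3)}$ after collecting terms.

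Concretely I would organize the computation as follows. Write the inductive hypothesis matrix as having first column $(JQ_{n+4}^{(3)},JQ_{n+3}^{(3)},JQ_{n+2}^{(3)})^{T}$, third column twice the first column shifted down by one, i.e. $(2JQ_{n+3}^{(3)},2JQ_{n+2}^{(3)},2JQ_{n+1}^{(3)})^{T}$, and middle column $(JQ_{n+3}^{(3)}+2JQ_{n+2}^{(3)},JQ_{n+2}^{(3)}+2JQ_{n+1}^{(3)},JQ_{n+1}^{(3)}+2JQ_{n}^{(3)})^{T}$. Multiplying by $M$ on the right: the new first column is (old col 1)$+$(old col 2), the new second column is (old col 1)$+$(old col 3), and the new third column is $2\cdot$(old col 1). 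For the new first column, row $1$ gives $JQ_{n+4}^{(3)}+JQ_{n+3}^{(3)}+2JQ_{n+2}^{(3)}=JQ_{n+5}^{(3)}$ by the recurrence, and similarly rows $2,3$ give $JQ_{n+4}^{(3)}$ and $JQ_{n+3}^{(3)}$; the new third column is then manifestly $2JQ_{n+4}^{(3)},2JQ_{n+3}^{(3)},2JQ_{n+2}^{(3)}$; and the new second column, row $1$, is $JQ_{n+4}^{(3)}+2JQ_{n+3}^{(3)}$, which is exactly the $(1,2)$ entry $JQ_{(n+1)+3}^{(3)}+2JQ_{(n+1)+2}^{(3)}$ claimed. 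All nine entries then match the right-hand side with $n$ replaced by $n+1$, completing the induction.

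The only mild obstacle is bookkeeping: one must be careful that quaternion multiplication is noncommutative, but here $M$ has integer entries and acts by right multiplication, so all the arithmetic is genuinely $\Z$-linear in the quaternion entries and no ordering issues arise. A secondary point worth a sentence is justifying that the recurrence $JQ_{m+3}^{(3)}=JQ_{m+2}^{(3)}+JQ_{m+1}^{(3)}+2JQ_{m}^{(3)}$ holds for every $m\geq 0$ (including the small indices appearing when $n$ is small), which follows directly from applying \eqref{c1} componentwise to each of the four shifts $J_{m}^{(3)},J_{m+1}^{(3)},J_{m+2}^{(3)},J_{m+3}^{(3)}$ defining the quaternion. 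Once that is in place the proof is routine.
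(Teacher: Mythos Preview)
Your proof is correct and follows essentially the same approach as the paper: induction on $n$, with the base case $R\cdot M^{0}=R$ immediate and the inductive step handled by right-multiplying by $M$ and invoking the quaternion recurrence $JQ_{m+3}^{(3)}=JQ_{m+2}^{(3)}+JQ_{m+1}^{(3)}+2JQ_{m}^{(3)}$. Your additional remarks on why noncommutativity causes no trouble and on the componentwise justification of the quaternion recurrence are sound elaborations, but the core argument is identical to the paper's.
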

\begin{proof}
(By induction on $n$) If $n=0$, then the result is obvious. Now, we suppose it is true for $n=t$, that is
$$R\cdot M^{t}=\left[ 
\begin{array}{ccc}
JQ_{t+4}^{(3)} & JQ_{t+3}^{(3)}+2JQ_{t+2}^{(3)} & 2JQ_{t+3}^{(3)} \\ 
JQ_{t+3}^{(3)} & JQ_{t+2}^{(3)}+2JQ_{t+1}^{(3)} & 2JQ_{t+2}^{(3)} \\ 
JQ_{t+2}^{(3)} & JQ_{t+1}^{(3)}+2JQ_{t}^{(3)} & 2JQ_{t+1}^{(3)}%
\end{array}%
\right].$$
Using the definition (\ref{c1}), for $t\geq 0$, we have
\begin{equation}
JQ_{t+3}^{(3)}=JQ_{t+2}^{(3)}+JQ_{t+1}^{(3)}+2JQ_{t}^{(3)}.
\end{equation}
Then, by induction hypothesis
\begin{align*}
R\cdot M^{t+1}&=\left(R\cdot M^{t}\right)\cdot M\\
&=\left[ 
\begin{array}{ccc}
JQ_{t+4}^{(3)} & JQ_{t+3}^{(3)}+2JQ_{t+2}^{(3)} & 2JQ_{t+3}^{(3)} \\ 
JQ_{t+3}^{(3)} & JQ_{t+2}^{(3)}+2JQ_{t+1}^{(3)} & 2JQ_{t+2}^{(3)} \\ 
JQ_{t+2}^{(3)} & JQ_{t+1}^{(3)}+2JQ_{t}^{(3)} & 2JQ_{t+1}^{(3)}%
\end{array}%
\right]\left[ 
\begin{array}{ccc}
1 & 1 & 2 \\ 
1 & 0 & 0 \\ 
0 & 1 & 0%
\end{array}%
\right]\\
&=\left[ 
\begin{array}{ccc}
JQ_{t+4}^{(3)}+JQ_{t+3}^{(3)}+2JQ_{t+2}^{(3)} & JQ_{t+4}^{(3)}+2JQ_{t+3}^{(3)} & 2JQ_{t+4}^{(3)} \\ 
JQ_{t+3}^{(3)}+JQ_{t+2}^{(3)}+2JQ_{t+1}^{(3)} & JQ_{t+3}^{(3)}+2JQ_{t+2}^{(3)} & 2JQ_{t+3}^{(3)} \\ 
JQ_{t+2}^{(3)}+JQ_{t+1}^{(3)}+2JQ_{t}^{(3)} & JQ_{t+2}^{(3)}+2JQ_{t+1}^{(3)} & 2JQ_{t+2}^{(3)}%
\end{array}%
\right]\\
&=\left[ 
\begin{array}{ccc}
JQ_{t+5}^{(3)} & JQ_{t+4}^{(3)}+2JQ_{t+3}^{(3)} & 2JQ_{t+4}^{(3)} \\ 
JQ_{t+4}^{(3)} & JQ_{t+3}^{(3)}+2JQ_{t+2}^{(3)} & 2JQ_{t+3}^{(3)} \\ 
JQ_{t+3}^{(3)} & JQ_{t+2}^{(3)}+2JQ_{t+1}^{(3)} & 2JQ_{t+2}^{(3)}%
\end{array}%
\right].
\end{align*}
Hence, the equation (\ref{g3}) holds for all $n\geq0$.
\end{proof}

\begin{corollary}
For $n\geq0$, 
\begin{equation}
JQ_{n+2}^{(3)}=JQ_{2}^{(3)}J_{n+1}^{(3)}+(JQ_{1}^{(3)}+2JQ_{0}^{(3)})J_{n}^{(3)}+2JQ_{1}^{(3)}J_{n-1}^{(3)}.
\end{equation}
\end{corollary}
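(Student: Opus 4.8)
The plan is to obtain the claimed identity as a single scalar entry of the matrix equation in Theorem~\ref{t7}. First I would recall that, by equation~(\ref{g1}), the first column of $M^{n}$ is the column vector with entries $J_{n+1}^{(3)}$, $J_{n}^{(3)}$, $J_{n-1}^{(3)}$ from top to bottom, where the convention $J_{-1}^{(3)}=0$ fixed just before Theorem~\ref{t7} handles the case $n=0$. Next I would compute the $(3,1)$ entry of the left-hand side $R\cdot M^{n}$ of~(\ref{g3}): multiplying the third row $\big(JQ_{2}^{(3)},\ JQ_{1}^{(3)}+2JQ_{0}^{(3)},\ 2JQ_{1}^{(3)}\big)$ of $R$ (read off from~(\ref{g2})) against this first column of $M^{n}$ gives exactly $JQ_{2}^{(3)}J_{n+1}^{(3)}+(JQ_{1}^{(3)}+2JQ_{0}^{(3)})J_{n}^{(3)}+2JQ_{1}^{(3)}J_{n-1}^{(3)}$.

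Then I would observe that the $(3,1)$ entry of the right-hand side of~(\ref{g3}) is simply $JQ_{n+2}^{(3)}$. Equating the two expressions yields the corollary for every $n\geq 0$. The argument is pure bookkeeping, so there is no genuine obstacle; the only point deserving a moment's care is the boundary case $n=0$, where the term $J_{-1}^{(3)}$ occurs: using $J_{-1}^{(3)}=0$, $J_{0}^{(3)}=0$ and $J_{1}^{(3)}=1$ the identity collapses to the trivial $JQ_{2}^{(3)}=JQ_{2}^{(3)}$, so it is consistent.

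As an alternative (which I would mention but not carry out), one can bypass matrices entirely and argue by induction on $n$, using the quaternion recurrence $JQ_{n+3}^{(3)}=JQ_{n+2}^{(3)}+JQ_{n+1}^{(3)}+2JQ_{n}^{(3)}$ together with the third order Jacobsthal recurrence~(\ref{c1}) satisfied by the scalar coefficients $J_{n+1}^{(3)},J_{n}^{(3)},J_{n-1}^{(3)}$; the base cases $n=0,1,2$ are checked directly from the initial values. The matrix route via Theorem~\ref{t7} is shorter, so that is the one I would present.
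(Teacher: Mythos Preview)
Your proof is correct and is exactly the argument the paper gives: read off the $(3,1)$ entry of both sides of equation~(\ref{g3}) using the third row of $R$ from~(\ref{g2}) and the first column of $M^{n}$ from~(\ref{g1}). Your extra remarks on the $n=0$ boundary case and the alternative induction are fine but go beyond what the paper records.
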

\begin{proof}
The proof can be easily seen by the coefficient (3,1) of the matrix $R\cdot M^{n}$ and the equation (\ref{g1}).
\end{proof}

\section{Conclusions}
In this paper, we study a generalization of the Jacobsthal and Jacobsthal-Lucas quaternions. Particularly, we define the third order Jacobsthal and third order Jacobsthal-Lucas quaternions, and we find some combinatorial identities.
As seen in \cite{Cook-Bac} one way to generalize the Jacobsthal recursion is as follows
$$J_{n+k}^{(k)}=\sum_{j=1}^{k-1}J_{n+k-j}^{(k)}+2J_{k}^{(k)},$$
with $n\geq0$ and initial conditions $J_{i}^{(k)}=0$, for $i=0,1,...,k-2$ and $J_{k-1}^{(k)}=1$, has
characteristic equation $(x-2)(x^{k-1}+x^{k-2}+....+x+1)=0$ with eigenvalues 2 and $\omega_{j}=e^{\frac{2\pi im}{k}}$, for $j=0,1,...,k-1$. It would be interesting to introduce the higher order Jacobsthal and Jacobsthal-Lucas quaternions.

Further investigations for these and other methods useful in discovering identities for the higher order Jacobsthal and Jacobsthal-Lucas quaternions will be addressed in a future paper.

\section{Acknowledgments}
The author would like to thank the anonymous referees for suggestions to improve the paper.




\begin{thebibliography}{1}
\bibitem{Ba} P. Barry, Triangle geometry and Jacobsthal numbers, Irish Math. Soc. Bulletin, 51 (2003), 45--57.
\bibitem{Cer} G. Cerda-Morales, On generalized Fibonacci and Lucas numbers by matrix methods, Hacettepe journal of mathematics and statistics, 42(2) (2013), 173--179.
\bibitem{Cook-Bac} Ch. K. Cook and M. R. Bacon, Some identities for Jacobsthal and Jacobsthal-Lucas numbers satisfying higher order recurrence relations, Annales Mathematicae et Informaticae, 41 (2013), 27--39.
\bibitem{Che-Lou} W. Y. C. Chen and J. D. Louck, The combinatorial power of the companion matrix, Linear Algebra Appl., 232 (1996), 261--278.
\bibitem{Hal1} S. Halici, On Fibonacci quaternions, Adv. Appl. Clifford Algebras, 22 (2012), 321--327.
\bibitem{Hal2} S. Halici, On complex Fibonacci quaternions, Adv. Appl. Clifford Algebras, 23 (2013), 105--112.
\bibitem{Hor1} A. F. Horadam, Complex Fibonacci numbers and Fibonacci quaternions, Am. Math. Month., 70 (1963), 289--291.
\bibitem{Hor2} A. F. Horadam, Quaternion recurrence relations, Ulam Quarterly, 2 (1993), 23--33 .
\bibitem{Hor3} A. F. Horadam, Jacobsthal representation numbers, Fibonacci Quarterly, 34 (1996), 40--54.
\bibitem{Ive} M. R. Iyer, A note on Fibonacci quaternions, Fibonacci Quaterly, 7(3) (1969), 225--229.
\bibitem{Kal} D. Kalman, Generalized Fibonacci numbers by matrix methods, Fibonacci Quaterly, 20(1) (1982), 73--76.
\bibitem{Sha-Hor} A.G. Shannon and A.F. Horadam, \textit{Some properties of third-order recurrence relations}, Fibonacci Quart., 10(2) (1972), 135--145.
\bibitem{Sha-Hor1} A. G. Shannon and A. F. Horadam, \textit{Generating functions for powers of third order recurrence sequences}, Duke Mathematical Journal, 38 (1971), 791--794.
\bibitem{Szy-Wl} A. Szynal-Liana and I. W\l och, A Note on Jacobsthal Quaternions, Adv. Appl. Clifford Algebras, 26 (2016), 441--447.
\end{thebibliography}
\end{document}